\begin{document}

\def\alp{\alpha}
\def\bet{\beta}
\def\gam{\gamma}
\def\del{\delta}
\def\eps{\epsilon}
\def\zet{\zeta}
\def\tht{\theta}
\def\iot{\iota}
\def\kap{\kappa}
\def\lam{\lambda}
\def\sig{\sigma}
\def\ups{\upsilon}
\def\ome{\omega}
\def\vep{\varepsilon}
\def\vth{\vartheta}
\def\vpi{\varpi}
\def\vrh{\varrho}
\def\vsi{\varsigma}
\def\vph{\varphi}
\def\Gam{\Gamma}
\def\Del{\Delta}
\def\Tht{\Theta}
\def\Lam{\Lambda}
\def\Sig{\Sigma}
\def\Ups{\Upsilon}
\def\Ome{\Omega}
\def\vTh{\varTheta}
\def\vGm{\varGamma}
\def\vPh{\varPhi}
\def\vPs{\varPsi}

\def\frka{{\mathfrak a}}    \def\frkA{{\mathfrak A}}
\def\frkb{{\mathfrak b}}    \def\frkB{{\mathfrak B}}
\def\frkc{{\mathfrak c}}    \def\frkC{{\mathfrak C}}
\def\frkd{{\mathfrak d}}    \def\frkD{{\mathfrak D}}
\def\frke{{\mathfrak e}}    \def\frkE{{\mathfrak E}}
\def\frkf{{\mathfrak f}}    \def\frkF{{\mathfrak F}}
\def\frkg{{\mathfrak g}}    \def\frkG{{\mathfrak G}}
\def\frkh{{\mathfrak h}}    \def\frkH{{\mathfrak H}}
\def\frki{{\mathfrak i}}    \def\frkI{{\mathfrak I}}
\def\frkj{{\mathfrak j}}    \def\frkJ{{\mathfrak J}}
\def\frkk{{\mathfrak k}}    \def\frkK{{\mathfrak K}}
\def\frkl{{\mathfrak l}}    \def\frkL{{\mathfrak L}}
\def\frkm{{\mathfrak m}}    \def\frkM{{\mathfrak M}}
\def\frkn{{\mathfrak n}}    \def\frkN{{\mathfrak N}}
\def\frko{{\mathfrak o}}    \def\frkO{{\mathfrak O}}
\def\frkp{{\mathfrak p}}    \def\frkP{{\mathfrak P}}
\def\frkq{{\mathfrak q}}    \def\frkQ{{\mathfrak Q}}
\def\frkr{{\mathfrak r}}    \def\frkR{{\mathfrak R}}
\def\frks{{\mathfrak s}}    \def\frkS{{\mathfrak S}}
\def\frkt{{\mathfrak t}}    \def\frkT{{\mathfrak T}}
\def\frku{{\mathfrak u}}    \def\frkU{{\mathfrak U}}
\def\frkv{{\mathfrak v}}    \def\frkV{{\mathfrak V}}
\def\frkw{{\mathfrak w}}    \def\frkW{{\mathfrak W}}
\def\frkx{{\mathfrak x}}    \def\frkX{{\mathfrak X}}
\def\frky{{\mathfrak y}}    \def\frkY{{\mathfrak Y}}
\def\frkz{{\mathfrak z}}    \def\frkZ{{\mathfrak Z}}
%script letters

\def\cal{\fam2}
\def\cala{{\cal A}}
\def\calb{{\cal B}}
\def\calc{{\cal C}}
\def\cald{{\cal D}}
\def\cale{{\cal E}}
\def\calf{{\cal F}}
\def\calg{{\cal G}}
\def\calh{{\cal H}}
\def\cali{{\cal I}}
\def\calj{{\cal J}}
\def\calk{{\cal K}}
\def\call{{\cal L}}
\def\calm{{\cal M}}
\def\caln{{\cal N}}
\def\calo{{\cal O}}
\def\calp{{\cal P}}
\def\calq{{\cal Q}}
\def\calr{{\cal R}}
\def\cals{{\cal S}}
\def\calt{{\cal T}}
\def\calu{{\cal U}}
\def\calv{{\cal V}}
\def\calw{{\cal W}}
\def\calx{{\cal X}}
\def\caly{{\cal Y}}
\def\calz{{\cal Z}}

\def\AA{{\mathbb A}}
\def\BB{{\mathbb B}}
\def\CC{{\mathbb C}}
\def\DD{{\mathbb D}}
\def\EE{{\mathbb E}}
\def\FF{{\mathbb F}}
\def\GG{{\mathbb G}}
\def\HH{{\mathbb H}}
\def\II{{\mathbb I}}
\def\JJ{{\mathbb J}}
\def\KK{{\mathbb K}}
\def\LL{{\mathbb L}}
\def\MM{{\mathbb M}}
\def\NN{{\mathbb N}}
\def\OO{{\mathbb O}}
\def\PP{{\mathbb P}}
\def\QQ{{\mathbb Q}}
\def\RR{{\mathbb R}}
\def\SS{{\mathbb S}}
\def\TT{{\mathbb T}}
\def\UU{{\mathbb U}}
\def\VV{{\mathbb V}}
\def\WW{{\mathbb W}}
\def\XX{{\mathbb X}}
\def\YY{{\mathbb Y}}
\def\ZZ{{\mathbb Z}}

\def\bfa{{\mathbf a}}    \def\bfA{{\mathbf A}}
\def\bfb{{\mathbf b}}    \def\bfB{{\mathbf B}}
\def\bfc{{\mathbf c}}    \def\bfC{{\mathbf C}}
\def\bfd{{\mathbf d}}    \def\bfD{{\mathbf D}}
\def\bfe{{\mathbf e}}    \def\bfE{{\mathbf E}}
\def\bff{{\mathbf f}}    \def\bfF{{\mathbf F}}
\def\bfg{{\mathbf g}}    \def\bfG{{\mathbf G}}
\def\bfh{{\mathbf h}}    \def\bfH{{\mathbf H}}
\def\bfi{{\mathbf i}}    \def\bfI{{\mathbf I}}
\def\bfj{{\mathbf j}}    \def\bfJ{{\mathbf J}}
\def\bfk{{\mathbf k}}    \def\bfK{{\mathbf K}}
\def\bfl{{\mathbf l}}    \def\bfL{{\mathbf L}}
\def\bfm{{\mathbf m}}    \def\bfM{{\mathbf M}}
\def\bfn{{\mathbf n}}    \def\bfN{{\mathbf N}}
\def\bfo{{\mathbf o}}    \def\bfO{{\mathbf O}}
\def\bfp{{\mathbf p}}    \def\bfP{{\mathbf P}}
\def\bfq{{\mathbf q}}    \def\bfQ{{\mathbf Q}}
\def\bfr{{\mathbf r}}    \def\bfR{{\mathbf R}}
\def\bfs{{\mathbf s}}    \def\bfS{{\mathbf S}}
\def\bft{{\mathbf t}}    \def\bfT{{\mathbf T}}
\def\bfu{{\mathbf u}}    \def\bfU{{\mathbf U}}
\def\bfv{{\mathbf v}}    \def\bfV{{\mathbf V}}
\def\bfw{{\mathbf w}}    \def\bfW{{\mathbf W}}
\def\bfx{{\mathbf x}}    \def\bfX{{\mathbf X}}
\def\bfy{{\mathbf y}}    \def\bfY{{\mathbf Y}}
\def\bfz{{\mathbf z}}    \def\bfZ{{\mathbf Z}}

\def\scra{{\mathscr A}}
\def\scrb{{\mathscr B}}
\def\scrc{{\mathscr C}}
\def\scrd{{\mathscr D}}
\def\scre{{\mathscr E}}
\def\scrf{{\mathscr F}}
\def\scrg{{\mathscr G}}
\def\scrh{{\mathscr H}}
\def\scri{{\mathscr I}}
\def\scrj{{\mathscr J}}
\def\scrk{{\mathscr K}}
\def\scrl{{\mathscr L}}
\def\scrm{{\mathscr M}}
\def\scrn{{\mathscr N}}
\def\scro{{\mathscr O}}
\def\scrp{{\mathscr P}}
\def\scrq{{\mathscr Q}}
\def\scrr{{\mathscr R}}
\def\scrs{{\mathscr S}}
\def\scrt{{\mathscr T}}
\def\scru{{\mathscr U}}
\def\scrv{{\mathscr V}}
\def\scrw{{\mathscr W}}
\def\scrx{{\mathscr X}}
\def\scry{{\mathscr Y}}
\def\scrz{{\mathscr Z}}

\def\phm{\phantom}
\def\ds{\displaystyle }
\def\smallstrut{\vphantom{\vrule height 3pt }}
\def\bdm #1#2#3#4{\left(
\begin{array} {c|c}{\ds{#1}}
 & {\ds{#2}} \\ \hline
{\ds{#3}\vphantom{\ds{#3}^1}} &  {\ds{#4}}
\end{array}
\right)}

\def\GL{\mathrm{GL}}
\def\SL{\mathrm{SL}}
\def\Sp{\mathrm{Sp}}
\def\SU{\mathrm{SU}}
\def\SO{\mathrm{SO}}
\def\GSp{\mathrm{GSp}}
\def\Spin{\mathrm{Spin}}
\def\U{\mathrm{U}}
\def\O{\mathrm{O}}
\def\Mat{\mathrm{M}}
\def\Nr{\mathrm{Nr}}
\def\Tr{\mathrm{Tr}}
\def\tr{\mathrm{tr}}
\def\Nrm{\mathrm{Nrm}}
\def\Paf{\mathrm{Paf}}
\def\Prm{\mathrm{Prm}}
\def\Lif{\mathrm{Lift}}
\def\lif{\mathrm{lift}}
\def\rank{\mathrm{rank}}
\def\cusp{\mathrm{cusp}}
\def\new{\mathrm{new}}
\def\oname{\mathrm}
\def\Re{\mathrm{Re}}
\def\Imm{\mathrm{Im}}
\def\sgn{\mathrm{sgn}}
\def\diag{\mathrm{diag}}
\def\La{\langle}
\def\Ra{\rangle}

\def\scre{{\mathscr E}}
\def\scrh{{\mathscr H}}
\def\scrv{{\mathscr V}}

\def\trs{\,^t\!}
\def\iu{\sqrt{-1}}
\def\oo{\hbox{\bf 0}}
\def\ono{\hbox{\bf 1}}
\def\smallcirc{\lower .3em \hbox{\rm\char'27}\!}
\def\AAf{\AA_{\rm f}}
\def\thalf{\textstyle{\frac{1}{2}}}
\def\bsl{\backslash}
\def\wtl{\widetilde}
\def\tx{\text}
\def\Chi{\underline{\chi}}
\def\Psi{\underline{\psi}}
\def\oln{\overline}
\def\til{\tilde}
\def\Ind{\operatorname{Ind}}
\def\ord{\operatorname{ord}}
\def\pal{\partial}
\def\beq{\begin{equation}}
\def\eeq{\end{equation}}
\def\Let{\begin{pmatrix}}
\def\Rit{\end{pmatrix}}
\def\Left{\left(\begin{smallmatrix}}
\def\Right{\end{smallmatrix}\right)}
\def\Leftt{\begin{smallmatrix}}
\def\Rightt{\end{smallmatrix}}
\def\Sum{\displaystyle \sum}
\def\Prod{\displaystyle \prod}
\def\Iot{^{\iota}}
\def\Pr{^{\prime}}
\def\I{\textgt{i}}
\def\J{\textgt{j}}
\def\K{\textgt{k}}

\newcounter{one}
\setcounter{one}{1}
\newcounter{two}
\setcounter{two}{2}
\newcounter{thr}
\setcounter{thr}{3}
\newcounter{fou}
\setcounter{fou}{4}
\newcounter{fiv}
\setcounter{fiv}{5}
\newcounter{six}
\setcounter{six}{6}

\def\FRAC#1#2{\leavevmode\kern.1em
\raise.5ex\hbox{\the\scriptfont0 #1}\kern-.1em
/\kern-.15em\lower.25ex\hbox{\the\scriptfont0 #2}}
\newcommand{\Leg}[2]{\left(\!\FRAC{\ensuremath{#1}}{\ensuremath{#2}}\right)}
\def\DFRAC#1#2{\leavevmode\kern.1em
\raise1.0ex\hbox{#1}\kern-.2em
{\Big/}\kern-.3em\lower.9ex\hbox{#2}}
\newcommand{\DLeg}[2]{\left(\!\DFRAC{\ensuremath{#1}}{\ensuremath{#2}}\right)}
\def\mtrx#1#2#3#4{\left(\begin{smallmatrix} {\text{\small $#1$}}\; & {\text{\small $#2$}} \\ \vphantom{C^{C^C}}{\text{\small $#3$}}\; & \vphantom{C^2}{\text{\small $#4$}} \end{smallmatrix}\right)}

\newcommand{\sfrac}[2]{%
\raisebox{0.2em}{$#1$}\kern-0.1em%
\raisebox{0.1em}{/}\kern-0.1em\raisebox{-0.2em}{$#2$}}

\def\lddots{\mathinner{\mskip1mu\raise1pt\vbox{\kern7pt\hbox{.}}\mskip2mu\raise4pt\hbox{.}\mskip2mu\raise7pt\hbox{.}\mskip1mu}}
\def\onon{\Left 0 & -1 \\ 1 & 0\Right}

\def\today{\ifcase\month\or
 January\or February\or March\or April\or May\or June\or
 July\or August\or September\or October\or November\or December\fi
 \space\number\day, \number\year}

\makeatletter
\@addtoreset{equation}{section}
\def\theequation{\thesection.\arabic{equation}}

\theoremstyle{plain}
\newtheorem{theorem}{Theorem}[section]
\newtheorem*{main_theorem}{Theorem}
\newtheorem*{main_remark}{Remark}
\newtheorem{lemma}[theorem]{Lemma}
\newtheorem{proposition}[theorem]{Proposition}
\theoremstyle{definition}
\newtheorem{definition}[theorem]{Definition}
\newtheorem{conjecture}[theorem]{Conjecture}
\newtheorem*{main_conjecture}{Conjecture \ref{conj:91}}
\theoremstyle{remark}
\newtheorem{remark}[theorem]{Remark}
\newtheorem{corollary}[theorem]{Corollary}

%%%%%%%%%%%%%%%%%%%%%%%%%%%%%%%%%%%%%%%%%%%%%%%%%%%%%%%%%%%%%%%%%%%%%%%%%%%%%%%%%

\title[]{Maass relations in higher genus}
\author{Shunsuke Yamana}
\address{Graduate school of mathematics, Kyoto University, Kitashirakawa, Kyoto, 606-8502, Japan}
\email{yamana07@math.kyoto-u.ac.jp}
%\keywords{Ikeda lifting; Maass space}
%\subjclass[2000]{11F30}
\begin{abstract}
For an arbitrary even genus $2n$ we show that the subspace of Siegel cusp forms of degree $2n$ generated by Ikeda lifts of elliptic cusp forms can be characterized by certain linear relations among Fourier coefficients. 
This generalizes the classical Maass relations in genus two to higher degrees. 
\end{abstract}
\thanks{The author should like to express his sincere thanks to Prof.~Ikeda for suggesting the problem, constant help throughout this paper, encouragement and patience. He is supported by JSPS Research Fellowships for Young Scientists. }
\maketitle

%%%%%%%%%%%%%%%%%%%%%%%%%%%%%%%%%%%%%%%%%%%%%%%%%%%%%%%%%%%%%%%%%%%%%%%%%%%%%%%%%

\section*{\bf Introduction}\label{intro}

The purpose of this paper is to give a characterization of the image of Ikeda's lifting by certain linear relations among Fourier coefficients. 

Let us describe our results. 
Let us fix a positive integer $n$. 
We denote by $T^+_m$ the set of positive definite symmetric half-integral matrices of size $m$. 
Put $D_h=\det(2h)$ for $h\in T^+_{2n}$. 
Let $\frkd_h$ be the absolute value of discriminant of $\QQ(((-1)^nD_h)^{1/2})/\QQ$. 
Put $\frkf_h=(\frkd_h^{-1}D_h)^{1/2}$. 

Fix a prime number $p$. The Siegel series attached to $h$ is defined by 
\[b_p(h,s)=\sum_\alp\bfe_p(-\tr(h\alp))\nu(\alp)^{-s}, \]
where $\alp$ extends over all symmetric matrices of rank $2n$ with entries in $\QQ_p/\ZZ_p$. 
Here, we put $\bfe_p(x)=e^{-2\pi\iu x\Pr}$, where $x\Pr$ is the fractional part of $x\in\QQ_p$, and $\nu(\alp)=[\alp\ZZ^{2n}+\ZZ^{2n}:\ZZ^{2n}]$. 

As is well-known, $b_p(h,s)$ is a product of two polynomials $\gam_{p,h}(p^{-s})$ and $F_{p,h}(p^{-s})$, where
\[\gam_{p,h}(X)=(1-X)\biggl(1-\biggl(\frac{(-1)^n\frkd_h}{p}\biggl)p^nX\biggl)^{-1}\prod_{j=1}^n(1-p^{2j}X^2)\] and the constant term of $F_{p,h}$ is $1$. Put 
\[\til{F}_{p,h}(X)=X^{-\ord_p\frkf_h}F_{p,h}(p^{-n-1/2}X). \]
Then the following functional equation holds
\[\til{F}_{p,h}(X)=\til{F}_{p,h}(X^{-1}). \]

Ikeda constructed a lifting from $S_{2k}(\SL_2(\ZZ))$ to $S_{k+n}(\Sp_{2n}(\ZZ))$ for each positive integer $k$ such that $k\equiv n\pmod 2$. 
\begin{main_theorem}[Ikeda {\cite{Ik}}]
Let $f\in S_{2k}(\SL_2(\ZZ))$ with $k\equiv n\pmod 2$ be a normalized Hecke eigenform, the $L$-function of which is given by
\[\prod_p(1-\alp_pp^{k-1/2-s})^{-1}(1-\alp_p^{-1}p^{k-1/2-s})^{-1}. \]
Let $g$ be a corresponding cusp form in the Kohnen plus space $S^+_{k+1/2}(4)$ under the Shimura correspondence. 
We define the function $F$ on the upper half-space $\frkH_n$ by  
\[F(Z)=\sum_{h\in T_{2n}^+}c_F(h)e^{2\pi\iu\tr(hZ)}, \]
where  
\[c_F(h)=c_g(\frkd_h)\frkf_h^{k-1/2}\prod_p\til{F}_{p,h}(\alp_p). \]
Here, we denote the $m$-th Fourier coefficient of $g$ by $c_g(m)$. 
Then $F$ is a cuspidal Hecke eigenform in $S_{k+n}(\Sp_{2n}(\ZZ))$. 
\end{main_theorem}

In \cite{Ko} Kohnen defined an integer $\phi(d; h)$ for each $h\in T_{2n}^+$ and each positive integer $d$ such that $\frkf_h$ is divisible by $d$. He showed that  
\begin{align*}
&I_{n,k}:&\sum_mc(m)e^{2\pi\iu m}&\mapsto \sum_{h\in T_{2n}^+}\sum_{d|\frkf_h}d^{k-1}\phi(d;h)c(d^{-2}D_h)e^{2\pi\iu\tr(hZ)}
\end{align*}
is a linear map from $S^+_{k+1/2}(4)$ to $S_{k+n}(\Sp_{2n}(\ZZ))$, which on Hecke eigenforms coincides with the Ikeda lifting. 

He also conjectured that if $F\in S_{k+n}(\Sp_{2n}(\ZZ))$ has a Fourier expansion of the form
\[F(Z)=\sum_{h\in T_{2n}^+}\sum_{d|\frkf_h}d^{k-1}\phi(d;h)c(d^{-2}D_h)e^{2\pi\iu\tr(hZ)} \]
for some function $c:\{m\in\NN\;|\;(-1)^km\equiv0,1\pmod 4\}\to \CC$ , then $F$ belongs to the image of the map $I_{n,k}$. 
If $n=1$, then this conjecture comes down to saying that the Maass space coincides with the image of the Saito-Kurokawa lifting, and hence it is true.   

Kohnen and Kojima proved the conjecture for all $n$ with $n\equiv 0,1\pmod 4$ in \cite{KK}. 
More precisely, they showed that  
\[g(\tau)=\sum_mc(m)e^{2\pi\iu m}\in S_{k+1/2}^+(4). \]

Let $S$ be a positive definite symmetric even-integral matrices of size $2n-1$. We write $J^\cusp_{k+n,S}$ for the space of Jacobi cusp forms of weight $k+n$ with index $S$ (see \S \ref{sec:3} for definition). 
We write the $S/2$-th Fourier-Jacobi coefficient of $F$ by $F_{S/2}$. 
The map $I_{k,n,S}$ is defined by 
\begin{align*}
&I_{n,k,S}:&S^+_{k+1/2}(4)&\to J^\cusp_{k+n,S}, & g&\mapsto I_{n,k}(g)_{S/2}. 
\end{align*}

We can take $S$ with $\det S=2$ if $n\equiv 0,1\pmod 4$. 
Then the map $I_{n,k,S}$ is an isomorphism and is given by 
\begin{align*}
&I_{n,k,S}:&\sum_mc(m)e^{2\pi\iu m}&\mapsto \sum_{(a,\alp)}c\Big(\det\mtrx{S}{S\alp}{\trs\alp S}{2a}\Big)e^{2\pi\iu(a\tau+S(\alp,w))}, 
\end{align*}
where $(a,\alp)$ extends over all pairs $(a, \alp)\in \ZZ\times S^{-1}(\ZZ^{2n-1})$ such that $a>S[\alp]/2$. 
This is a key observation in \cite{KK} and proves the conjecture when $n\equiv 0,1\pmod 4$. 

As is well-known, there is no $B$ with $\det(2B)=2$ if $n\equiv 2,3\pmod 4$. 
However, we shall show the following: 
\begin{main_theorem}[cf. Theorem \ref{thm:12}]
Under the notation above, there is $g\in S^+_{k+1/2}(4)$ with the following properties. 
\begin{enumerate}
\renewcommand\labelenumi{(\theenumi)}
\item If $n\equiv 3\pmod 4$, then  $c(m)=c_g(m)$. 
\item If $n\equiv 2\pmod 4$, then the following assertions holds:  
\begin{enumerate}
\renewcommand\labelenumi{(\theenumi)}
\item $F=I_{n,k}(g)$; 
\item if $m$ is not a square, then $c(m)=c_g(m)$; 
\item $c(f^2)=c_g(f^2)+(c(1)-c_g(1))f^k$ for every positive integer $f$. 
\end{enumerate}
\end{enumerate}
\end{main_theorem}

%\begin{main_remark} A function $c$ is not uniquely determined by $F$ in (\ref{tag:13}) if $n\equiv 2\pmod 4$ (cf. Remark \ref{rem:11}). \end{main_remark}

We now give a sketch of our proof of Theorem \ref{thm:12}. The space $J^{\cusp,M}_{\kap,S}$ consists of $\vPh\in J^{\cusp,M}_{\kap,S}$ which has a Fourier expansion of the form
\[\vPh(\tau,w)=\sum_{(a,\alp)}C\Big(\det\mtrx{S}{S\alp}{\trs\alp S}{2a}\Big)e^{2\pi\iu(a\tau+S(\alp,w))}\]
for some function $C$. 
Suppose that $S$ is maximal. 
The main result in \cite{Y} states that the map $I_{n,k,S}$ can be extended to a canonical isomorphism from a certain space of modular forms of weight $k+1/2$ onto $J^{\cusp,M}_{k+n, S}$ (we recall a precise formulation in \S \ref{sec:3}). 

The difficulty in attacking the case when $n\equiv 2,3\pmod 4$ is that $I_{n,k,S}(S^+_{k+1/2}(4))$ is a proper subspace of $J^{\cusp,M}_{k+n,S}$ if $\det S>2$. 
To solve the problem, we need the newform theory for modular forms of half-integral weight, which we recall in \S \ref{sec:2}. 
In \S \ref{sec:4} we see that the existence of a function $c$ gives rise to $g\in S^+_{k+1/2}(4)$ such that 
\[F_{S/2}=I_{n,k,S}(g)\] 
for all maximal $S$.  
This proves the conjecture for all $n$, and one could view the above condition as a natural generalization of Maass relations to higher genus. 

%%%%%%%%%%%%%%%%%%%%%%%%%%%%%%%%%%%%%%%%%%%%%%%%%%%%%%%%%%%%%%%%%%%%%%%%%%%%%%%%%

\section*{\bf Notation}\label{notation}

We denote by $S_\kap(\Sp_m(\ZZ))$ the space of Siegel cusp forms of weight $\kap$ with respect to the Siegel modular group $\Sp_m(\ZZ)$. 
Let $T^+_m$ be the set of positive definite symmetric half-integral matrices of size $m$. 
We denote the $h$-th Fourier coefficient by $c_F(h)$ for $F\in S_\kap(\Sp_m(\ZZ))$ and $h\in T^+_m$. 
Put $\bfe(z)=e^{2\pi\iu z}$ for a complex number $z$. 
Put $q=\bfe(\tau)$ for $\tau\in\frkH=\frkH_1$. 
Put
\[\frkD_k=\{m\in\NN\;|\;(-1)^km\equiv 0,1\pmod 4\}. \]

We fix a positive integer $n$ throughout this paper. 
For a non-zero element $N\in\QQ$, we denote the absolute value of the discriminant of $\QQ(((-1)^nN)^{1/2})/\QQ$ by $\frkd_N$. Put $\frkf_N=(\frkd_N^{-1}N)^{1/2}$ and $\frkf_p(N)=\ord_p\frkf_N$. 
For a rational prime $p$, we put $\Psi_p(N)=1$, $-1$, $0$ accordingly as $\QQ_p(\sqrt{N})$ is $\QQ_p$, an unramified quadratic extension or a ramified quadratic extension. 
Put $D_h=4^{[m/2]}\det h$ for $h\in T_m^+$, where $[\;]$ is the Gauss bracket.  
To simplify notation, we abbreviate $\frkd_h=\frkd_{D_h}$ and $\frkf_h=\frkf_{D_h}$. 
Put $h[A]=\trs AhA$ and $h(A,B)=\trs AhB$ for a matrices $A$ and $B$ if they are well-defined. 

%%%%%%%%%%%%%%%%%%%%%%%%%%%%%%%%%%%%%%%%%%%%%%%%%%%%%%%%%%%%%%%%%%%%%%%%%%%%%%%%%

\section{\bf Main Theorem}\label{sec:1}

For an integer $e$, we define $l_e\in \CC[X, X^{-1}]$ by 
\[l_e(X)=\begin{cases}
\dfrac{X^{e+1}-X^{-e-1}}{X-X^{-1}} &\tx{if $e\geq 0$. }\\
0 &\tx{if $e<0$. }
\end{cases}\]
For each prime number $p$, we put 
\[\lam_{p,N}=l_{\frkf_p(N)}-\Psi_p((-1)^nN)p^{-1/2}l_{\frkf_p(N)-1}. \]
(see Notation).  
%%%%%%%%%%%%%%%%%%%%%%%%%%%%%%%%%%%%%%%%%%%%%%%%%%%%%%%%%%%%%%%%%%%%%%%%%%%%%%%%%
%B\"{o}cherer's formula \cite{B} states that \begin{multline}\label{tag:11}\prod_p\til{F}_{p,h}(X_p)=\sum_{G\in\cald(h)}\prod_{p|f_h}X_p^{-\frkf_p(D_h)+2\ord_p\det G}\\\times\prod_{p|\frkd_{D_h}\frkf_{h[G^{-1}]}}(1-\Psi_p((-1)^nD_h)p^{-1/2}X_p)H_{n,p}(h[G^{-1}];X_p)\end{multline} for a certain polynomial $H_{n,p}(h;X)$ (see \cite[Proposition 1]{Ko}). Put \[\cald(h)=\GL_{2n}(\ZZ)\bsl\{G\in \Mat_{2n}(\ZZ)\cap\GL_{2n}(\QQ)\;|\;h[G^{-1}]\in T_{2n}\}, \] where $\GL_{2n}(\ZZ)$ operates by left-multiplication. 
%%%%%%%%%%%%%%%%%%%%%%%%%%%%%%%%%%%%%%%%%%%%%%%%%%%%%%%%%%%%%%%%%%%%%%%%%%%%%%%%%
Kohnen defined an integer $\phi(d;h)$ for each $h\in T_{2n}^+$ and each positive divisor $d$ of $\frkf_h$ (see \cite{Ko} or \cite{KK} for the precise definition). 
Let us note that the proof of the main result in \cite{Ko} states that 
\beq
\frkf_h^{k-1/2}\prod_p\wtl{F}_{p,h}(X_p)
=\sum_{d|\frkf_h}d^{k-1}\phi(d;h)(d^{-1}\frkf_h)^{k-1/2}\prod_p\lam_{p,d^{-2}D_h}(X_p)\label{tag:11}
\eeq
and the following theorem is a consequence of Ikeda's lifting, (\ref{tag:42}) and this formula. 

\begin{theorem}[Kohnen {\cite{Ko}}]\label{thm:11}
Under the notation of Ikeda's theorem, 
\beq
c_F(h)=\sum_{d|\frkf_h}d^{k-1}\phi(d;h)c_g(d^{-2}D_h). \label{tag:12}
\eeq
\end{theorem}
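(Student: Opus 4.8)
The plan is to derive the Fourier-coefficient formula \eqref{tag:12} by substituting the explicit value $c_F(h)=c_g(\frkd_h)\frkf_h^{k-1/2}\prod_p\til{F}_{p,h}(\alp_p)$ from Ikeda's theorem into the polynomial identity \eqref{tag:11}, specialized at the Satake parameters $X_p=\alp_p$ of $f$. First I would recall that the quantities $\frkd_h$, $\frkf_h$, $D_h$ are related by $D_h=\frkd_h\frkf_h^2$, so that $d\mid\frkf_h$ corresponds to an ``intermediate'' discriminant $d^{-2}D_h=\frkd_h(d^{-1}\frkf_h)^2$ lying in $\frkD_k$ and still having $\frkd$-part equal to $\frkd_h$. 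This ensures each term $c_g(d^{-2}D_h)$ in the right-hand side of \eqref{tag:12} is a genuine Fourier coefficient of the half-integral-weight form $g$.

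Next I would interpret the polynomials $\lam_{p,d^{-2}D_h}$ via the Shimura correspondence. The key input is that for a Hecke eigenform $g\in S^+_{k+1/2}(4)$ corresponding to $f$ under Shimura, the Fourier coefficients $c_g(m)$ satisfy a multiplicative relation governed by the Satake parameters $\alp_p$: writing $m=\frkd_m\frkf_m^2$, the ratio $c_g(m)/c_g(\frkd_m)$ factors as $\frkf_m^{k-1/2}\prod_p\lam_{p,m}(\alp_p)$, where $\lam_{p,m}$ is precisely the expression built from $l_e$ and $\Psi_p$ defined at the start of \S\ref{sec:1}. Concretely, the coefficient $c_g(m)$ of the plus-space form encodes, at each prime, a sum over the $p$-adic divisibility $\frkf_p(m)$ weighted by the local Hecke eigenvalue, and this is exactly what $\lam_{p,m}(\alp_p)$ computes. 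Thus I would establish
\[
c_g(d^{-2}D_h)=c_g(\frkd_h)\,(d^{-1}\frkf_h)^{k-1/2}\prod_p\lam_{p,d^{-2}D_h}(\alp_p),
\]
which rewrites the summand $d^{k-1}\phi(d;h)c_g(d^{-2}D_h)$ entirely in terms of $c_g(\frkd_h)$ and local factors.

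With this substitution, the right-hand side of \eqref{tag:12} becomes
\[
c_g(\frkd_h)\sum_{d|\frkf_h}d^{k-1}\phi(d;h)(d^{-1}\frkf_h)^{k-1/2}\prod_p\lam_{p,d^{-2}D_h}(\alp_p).
\]
By the identity \eqref{tag:11}, applied at $X_p=\alp_p$, the sum collapses to $\frkf_h^{k-1/2}\prod_p\til{F}_{p,h}(\alp_p)$, and multiplying by $c_g(\frkd_h)$ reproduces exactly Ikeda's formula for $c_F(h)$. This completes the chain. The main obstacle I anticipate is verifying the Shimura-correspondence relation between $c_g(m)$ and the local polynomials $\lam_{p,m}(\alp_p)$: one must check that the plus-space normalization of $g$, together with the precise definition of the Shimura lift sending $g$ to $f$, produces the divisor-sum structure encoded by $l_e$ and the factor $\Psi_p((-1)^nN)p^{-1/2}$ at ramified and inert primes. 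Once that compatibility is pinned down prime by prime, the remainder is purely formal bookkeeping that hinges on the already-established algebraic identity \eqref{tag:11}.
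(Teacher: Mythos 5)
Your proposal is correct and follows essentially the same route as the paper: the paper likewise obtains \eqref{tag:12} by combining Ikeda's formula for $c_F(h)$, the Shimura-correspondence relation $c_g(m)=c_g(\frkd_m)\frkf_m^{k-1/2}\prod_q\lam_{q,m}(\alp_q)$ (its equation \eqref{tag:42}), and the Kohnen identity \eqref{tag:11} evaluated at $X_p=\alp_p$. The one input you flag as an obstacle --- the compatibility of $c_g(m)$ with the local polynomials $\lam_{p,m}$ --- is exactly what the paper cites as \eqref{tag:42}, so your bookkeeping matches the intended argument.
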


We define the linear map $I_{n,k}:S^+_{k+1/2}(4)\to S_{k+n}(\Sp_{2n}(\ZZ))$ by (\ref{tag:12}). 

\begin{definition}\label{def:11}
If $k\equiv n\pmod 2$, the space $S^M_{k+n}(\Sp_{2n}(\ZZ))$ is defined in the following way: 
$F\in S_{k+n}(\Sp_{2n}(\ZZ))$ is an element of $S^M_{k+n}(\Sp_{2n}(\ZZ))$ if there is a function $c:\frkD_k\to\CC$ such that all $h\in T_{2n}^+$ satisfy 
\beq
c_F(h)=\sum_{d|\frkf_h}d^{k-1}\phi(d;h)c(d^{-2}D_h). \label{tag:13}
\eeq
\end{definition}

\begin{lemma}\label{lem:11}
If $n\equiv2\pmod 4$ and $h$ is an element of $T^+_{2n}$ with $\frkd_h=1$, then we have 
\[\sum_{d|\frkf_h}d^{k-1}\phi(d;h)(d^{-1}\frkf_h)^k=0. \]
\end{lemma}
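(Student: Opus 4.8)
The plan is to evaluate the master identity (\ref{tag:11}) simultaneously at $X_p=p^{1/2}$ for every prime $p$ and to read the asserted sum off its left-hand side. First I would record what $\frkd_h=1$ gives locally. As $n$ is even, $(-1)^nD_h=D_h$ is a perfect square, so for every $d\mid\frkf_h$ and every $p$ the number $(-1)^nd^{-2}D_h$ is a square in $\QQ_p$; hence $\Psi_p((-1)^nd^{-2}D_h)=1$ and $\frkf_p(d^{-2}D_h)=\ord_p(d^{-1}\frkf_h)$. Writing $e=\frkf_p(d^{-2}D_h)$ and using $l_e(p^{1/2})=p^{e/2}(1-p^{-e-1})/(1-p^{-1})$, a one-line cancellation gives $\lam_{p,d^{-2}D_h}(p^{1/2})=l_e(p^{1/2})-p^{-1/2}l_{e-1}(p^{1/2})=p^{e/2}$, and therefore $\prod_p\lam_{p,d^{-2}D_h}(p^{1/2})=(d^{-1}\frkf_h)^{1/2}$. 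Substituting $X_p=p^{1/2}$ into (\ref{tag:11}) then absorbs every half-power and yields
\[\frkf_h^{k-1/2}\prod_p\wtl F_{p,h}(p^{1/2})=\sum_{d\mid\frkf_h}d^{k-1}\phi(d;h)(d^{-1}\frkf_h)^k.\]
So the lemma is equivalent to the vanishing of the finite product $\prod_p\wtl F_{p,h}(p^{1/2})$, in which all but finitely many factors are $1$.

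Next I would localize the problem. By definition $\wtl F_{p,h}(p^{1/2})=p^{-(\ord_p\frkf_h)/2}F_{p,h}(p^{-n})=p^{-(\ord_p\frkf_h)/2}\,b_p(h,n)/\gam_{p,h}(p^{-n})$. Since $\frkd_h=1$ forces the Legendre symbol occurring in $\gam_{p,h}$ to be $1$, the factor $(1-p^nX)^{-1}$ cancels the zero of $1-p^{2n}X^2$, so $\gam_{p,h}(X)=(1-X)(1+p^nX)\prod_{j=1}^{n-1}(1-p^{2j}X^2)$ is a polynomial with $\gam_{p,h}(p^{-n})=2(1-p^{-n})\prod_{j=1}^{n-1}(1-p^{2j-2n})\ne0$. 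Thus each factor $\wtl F_{p,h}(p^{1/2})$ is a nonzero multiple of $b_p(h,n)$; since the product over $p$ is finite, it suffices to show $b_p(h,n)=0$, equivalently $F_{p,h}(p^{-n})=0$, for at least one $p\mid\frkf_h$.

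This local vanishing is the main obstacle, and it is where $n\equiv2\pmod4$ must enter. I would read it off the explicit shape of $\wtl F_{p,h}$ in the split case $\Psi_p=1$: there the symmetric Laurent polynomial factors into reciprocal pairs, and already the simplest case $\ord_p\frkf_h=1$ is telling, where (\ref{tag:11}) gives $\wtl F_{p,h}(X)=X+X^{-1}+p^{-1/2}(\phi(p;h)-1)$, so that $\wtl F_{p,h}(p^{1/2})=0$ precisely when the reciprocal root sits at $+p^{-1/2}$ rather than $-p^{-1/2}$. I expect the location of this root to be governed by the archimedean sign $(-1)^{n/2}$ attached to the positive definite form of rank $2n$ (the Weil index), which is $-1$ exactly for $n\equiv2\pmod4$. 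Equivalently, the generating function $\sum_h\frkf_h^{k-1/2}\prod_p\wtl F_{p,h}(p^{1/2})\,\bfe(\tr(hZ))$ is a degenerate Siegel--Eisenstein series whose functional equation carries this sign, and the sign $-1$ annihilates its $\frkd_h=1$ coefficients. The delicate point, which I expect to be the crux, is to track this sign through the explicit Siegel-series computation (via Katsurada's formula) and to confirm that it moves the reciprocal root from $-p^{-1/2}$ when $n\equiv0$ to $+p^{-1/2}$ when $n\equiv2$, producing the required zero at $X=p^{1/2}$.
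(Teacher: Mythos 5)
Your first paragraph reproduces the paper's own reduction: evaluating (\ref{tag:11}) at $X_p=p^{1/2}$, using $\lam_{p,d^{-2}D_h}(p^{1/2})=p^{\frkf_p(d^{-2}D_h)/2}$ and hence $\prod_p\lam_{p,d^{-2}D_h}(p^{1/2})=(d^{-1}\frkf_h)^{1/2}$, the lemma becomes exactly the statement $\prod_p\wtl F_{p,h}(p^{1/2})=0$. This part, including the observation that $\gam_{p,h}(p^{-n})\neq 0$ when $\frkd_h=1$, is correct and matches the paper.

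The gap is that you never prove this vanishing, and the route you sketch for it cannot work. Whether $\wtl F_{p,h}(p^{1/2})$ vanishes is a \emph{local} invariant of $h\otimes\ZZ_p$: up to a nonzero factor this value is the local density of representing $h$ by the orthogonal sum of $n$ hyperbolic planes over $\ZZ_p$, so it is nonzero precisely when the quadratic space $h\otimes\QQ_p$ is split, i.e.\ it is governed by the Hasse invariant of $h$ at $p$ --- not by $n\bmod 4$. Both possibilities occur at any fixed prime for any even $n$: for instance, if $h\otimes\ZZ_p$ is (split unimodular of rank $2n-2$) $\perp$ $p\cdot$(split unimodular of rank $2$), then $p\mid\frkf_h$ and $\wtl F_{p,h}(p^{1/2})\neq 0$ even when $n\equiv 2\pmod 4$; in your own $\ord_p\frkf_h=1$ formula this reflects the fact that $\phi(p;h)$ depends on $h\otimes\ZZ_p$, not on $n$. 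So no prime-by-prime computation that ``moves the reciprocal root'' according to $n\bmod 4$ can succeed; the vanishing is forced globally. Indeed, if $\wtl F_{p,h}(p^{1/2})\neq 0$ for every $p$, then $h\otimes\QQ_p$ would be split at every finite place while $h$ is positive definite at infinity, and the product formula for Hasse invariants yields the contradiction $1=(-1)^{n(n-1)/2}=-1$ for $n\equiv 2\pmod 4$; equivalently, there would exist a positive definite even unimodular lattice of rank $2n\equiv 4\pmod 8$, i.e.\ some $B\in T^+_{2n}$ with $D_B=1$, which is well known not to exist. This local-global step is exactly what the paper supplies by citing \cite[Proposition 1]{Ko} or \cite[Lemma 15.2]{Ik} together with the non-existence of such $B$; your proposal replaces it with an expectation (``I expect the location of this root to be governed by the archimedean sign''), which is precisely the crux, and the sign enters globally through the product formula rather than locally at each prime as you assume.
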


\begin{remark}\label{rem:11}
Lemma \ref{lem:11} shows that the parameter $c$ is not uniquely determined by $F$ in (\ref{tag:13}) if $n\equiv 2\pmod 4$. 
More precisely, a function $c\Pr:\frkD_k\to\CC$ defined by 
\[c\Pr(m)=\begin{cases}
c(m) &\tx{if $m\notin \QQ^{\times2}$ }\\
c(m)+a\cdot \frkf_m^k &\tx{if $m\in \QQ^{\times2}$ }
\end{cases}\]
is also a parameter of $F$ for each complex number $a$. We shall show that these are all parameters of $F$ (cf. Corollary \ref{cor:41}).  
\end{remark}

\begin{proof}
Although $\frkD_k$ contains $1$, it is well-known that there is no $B\in T^+_{2n}$ such that $D_B=1$. 
Therefore either \cite[Proposition 1]{Ko} or \cite[Lemma 15.2]{Ik} shows that $\prod_p\wtl{F}_{p,h}(p^{1/2})=0$. 
Lemma \ref{lem:11} is an easy consequence of (\ref{tag:11}) in view of $\prod_p\lam_{p,d^{-2}D_h}(p^{1/2})=(d^{-1}\frkf_h)^{1/2}$. 
\end{proof}

Our main result is the following: 
\begin{theorem}\label{thm:12}
If $F\in S_{k+n}(\Sp_{2n}(\ZZ))$ admits a Fourier expansion of the form (\ref{tag:13}),   
then there is $g\in S_{k+1/2}^+(4)$ with the following properties: 
\begin{enumerate}
\renewcommand\labelenumi{(\theenumi)}
\item If $n\equiv 0,1,3\pmod 4$, then $c(m)=c_g(m)$ for every $m\in\frkD_k$.     
\item If $n\equiv 2\pmod 4$, then  
\begin{enumerate}
\renewcommand\labelenumi{(\theenumi)} 
\item $F=I_{n,k}(g)$; 
\item If $m$ is not a square, then $c(m)=c_g(m)$; 
\item $c(f^2)=c_g(f^2)+(c(1)-c_g(1))f^k$ for every positive integer $f$. 
\end{enumerate}
\end{enumerate}
\end{theorem}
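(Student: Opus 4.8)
The plan is to descend from the Siegel form $F$ to data of half-integral weight in three stages: first pass to the Fourier--Jacobi coefficients of $F$, next convert each into a form of weight $k+1/2$ via the isomorphism of \cite{Y}, and finally use the newform theory of \S\ref{sec:2} to show that this whole family of half-integral weight forms is governed by a single $g\in S^+_{k+1/2}(4)$. The comparison of $c$ with $c_g$, including the square anomaly in part (2), will then fall out of (\ref{tag:13}) together with Lemma \ref{lem:11}.

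First I would fix a maximal positive definite even-integral symmetric $S$ of size $2n-1$ and examine the $S/2$-th Fourier--Jacobi coefficient $F_{S/2}\in J^\cusp_{k+n,S}$. Its coefficient at $(a,\alp)$ is $c_F(h)$ with $h=\frac12\mtrx{S}{S\alp}{\trs\alp S}{2a}$, so $\det\mtrx{S}{S\alp}{\trs\alp S}{2a}=D_h$. By hypothesis this equals $\sum_{d\mid\frkf_h}d^{k-1}\phi(d;h)c(d^{-2}D_h)$; since $\frkf_h$ and Kohnen's integers $\phi(d;h)$ depend on $h$ only through $D_h$, the coefficient is a function of $D_h$ alone, and therefore $F_{S/2}$ lies in the Maass subspace $J^{\cusp,M}_{k+n,S}$. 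By the main theorem of \cite{Y} the map $I_{n,k,S}$ extends to an isomorphism of a space $\scrm_S$ of modular forms of weight $k+1/2$ onto $J^{\cusp,M}_{k+n,S}$, so I obtain a unique $g_S\in\scrm_S$ with $I_{n,k,S}(g_S)=F_{S/2}$, whose Fourier coefficients are prescribed by the single function $c$, independently of $S$.

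The decisive step is to show that all the $g_S$ come from one $g\in S^+_{k+1/2}(4)$. When $n\equiv0,1\pmod4$ one may take $S$ with $\det S=2$; then $\scrm_S=S^+_{k+1/2}(4)$ and $I_{n,k,S}$ is already an isomorphism onto $J^{\cusp,M}_{k+n,S}$, so $g=g_S$ is produced at once with $c_g=c$ on the whole range of $\det\mtrx{S}{S\alp}{\trs\alp S}{2a}$, which recovers \cite{KK}. When $n\equiv2,3\pmod4$ no such $S$ exists, $\det S>2$ is forced, and $I_{n,k,S}(S^+_{k+1/2}(4))$ is a proper subspace of $J^{\cusp,M}_{k+n,S}$; this is precisely where the newform theory of \S\ref{sec:2} enters. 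The forms $g_S$ a priori sit in the larger spaces $\scrm_S$ at levels governed by $\det S$, but they share the Hecke data read off from $c$ at the fundamental arguments; strong multiplicity one together with the characterization of oldforms should force each $g_S$ to be the image of a common form of level $4$ in the plus space. I expect this descent to be the main obstacle, since it requires controlling the half-integral weight Fourier coefficients at the primes dividing $\det S$ and patching the resulting local data into a single global form of minimal level.

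Finally I would compare $c$ with $c_g$. If $m$ is not a square then $\frkd_m>1$, the arguments $d^{-2}D_h$ are never squares, and the leading term $d=1$ of (\ref{tag:13}), applied to both $F$ and $I_{n,k}(g)$ and combined with induction on $D_h$, yields $c(m)=c_g(m)$; this is part (1) when $n\equiv0,1,3\pmod4$ and part (2)(b) when $n\equiv2\pmod4$. If $m=f^2$ is a square then $\frkd_m=1$ and $\frkf_m=f$: for $n\not\equiv2\pmod4$ Lemma \ref{lem:11} does not apply, the corresponding leading combination is nonzero, and the same inversion forces $c=c_g$ on squares as well, completing part (1); for $n\equiv2\pmod4$ Lemma \ref{lem:11} shows $\sum_{d\mid\frkf_h}d^{k-1}\phi(d;h)(d^{-1}\frkf_h)^k=0$, which by Remark \ref{rem:11} is exactly the one-parameter freedom, and matching normalizations at $m=1$ gives $c(f^2)-c_g(f^2)=(c(1)-c_g(1))f^k$, i.e. (2)(c). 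For part (2)(a) I would substitute back into (\ref{tag:13}): the difference $c_F(h)-c_{I_{n,k}(g)}(h)=\sum_{d\mid\frkf_h}d^{k-1}\phi(d;h)(c-c_g)(d^{-2}D_h)$ vanishes for every $h$, since when $\frkd_h>1$ no square argument occurs, while when $\frkd_h=1$ all arguments are squares and the sum equals $(c(1)-c_g(1))\sum_{d\mid\frkf_h}d^{k-1}\phi(d;h)(d^{-1}\frkf_h)^k=0$ by Lemma \ref{lem:11}; hence $F=I_{n,k}(g)$.
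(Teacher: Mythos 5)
Your overall architecture (pass to Fourier--Jacobi coefficients, apply the isomorphism of \cite{Y}, invoke newform theory, then compare $c$ with $c_g$) is the same as the paper's, and your final paragraph is essentially the paper's endgame, including the use of Lemma \ref{lem:11} for parts (2)(a) and (2)(c). But there are two genuine gaps in the middle. First, your reason for placing $F_{S/2}$ in $J^{\cusp,M}_{k+n,S}$ is false: Kohnen's $\phi(d;h)$ does \emph{not} depend on $h$ only through $D_h$. Already for $n=1$ relation (\ref{tag:13}) is the classical Maass relation, whose sum runs over divisors of the content of $h$: the matrices $h_1=\mtrx{1}{0}{0}{4}$ and $h_2=\mtrx{2}{0}{0}{2}$ have the same $D_h=16$ and $\frkf_h=2$, yet $c_F(h_1)=c(16)$ while $c_F(h_2)=c(16)+2^{k-1}c(4)$, so $\phi(2;h_1)\neq\phi(2;h_2)$. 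What is true --- and is the content of Lemma \ref{lem:44}, proved via Proposition \ref{prop:31} and (\ref{tag:42}) (equivalently \cite[\S 12]{Y}) --- is that for the special matrices $B_{a,\alp}$ with $B$ maximal of \emph{prime} discriminant $D_B=p$, the sum (\ref{tag:13}) collapses to $c(D_{B_{a,\alp}})-p^kc(p^{-2}D_{B_{a,\alp}})$. This is a nontrivial identity, not a formal consequence of (\ref{tag:13}), and its precise two-term shape is exactly what the later comparison of $c$ with $c_g$ runs on; your version, which leaves the target function of $D_h$ unspecified, cannot feed the final step.

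Second, the step you yourself call ``the main obstacle'' is left unproved, and the tool you propose for it would not suffice. The issue is not distinguishing two eigenforms sharing Hecke data (strong multiplicity one), but ruling out that $F_{S/2}$ requires a component coming from a \emph{genuine newform} of level $4p$: Proposition \ref{prop:41} exhibits $J^{\cusp,M}_{k+n,2B}$ as the image of $S^+_{k+1/2}(4)\oplus S^{\new,+,+}_{k+1/2}(4p)$, and nothing in oldform theory forbids the second summand from occurring in $F_B$. The paper's key idea, absent from your proposal, is the growth argument of Lemma \ref{lem:45}: if $h_B\neq 0$, choose $m$ with $c_{h_B}(m)\neq 0$, $\Psi_p((-1)^km)\neq 1$ and $\frkf_p(m)=0$; then Lemma \ref{lem:44} forces $|c(mp^{2f})|$ to grow like $p^{kf}$ as $f\to\infty$, whereas repeating the construction at a second prime $q$ with $\Psi_q((-1)^km)\neq 1$ expresses $c(mp^{2f})$ through coefficients of $g_{B'}$ and $h_{B'}$ and bounds it by $O(p^{(k-1/2+\eps)f})$ via (\ref{tag:42}), (\ref{tag:43}) and Deligne's bounds --- a contradiction. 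Finally, the patching of the forms $g_B$ into a single $g$ is done in Lemma \ref{lem:46} not by multiplicity one but by showing $g_B|P(p)P(q)=g_{B'}|P(p)P(q)$ for the explicit operators $P(\ell)$, whose injectivity follows from Proposition \ref{prop:21}. Without these two ingredients your proposal does not close.
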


\begin{corollary}\label{cor:11}
The space $S^M_{k+n}(\Sp_{2n}(\ZZ))$ coincides with the image of the linear map $I_{n,k}$. 
\end{corollary}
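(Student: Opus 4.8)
The plan is to study $F$ through its Fourier--Jacobi expansion and to transport the defining relation (\ref{tag:13}) onto each Fourier--Jacobi coefficient. Writing $Z\in\frkH_{2n}$ in the block form adapted to the last variable, the $S/2$-th Fourier--Jacobi coefficient $F_{S/2}$ is a Jacobi cusp form in $J^\cusp_{k+n,S}$ whose coefficient at $(a,\alp)$ equals $c_F(h)$ with $h=\mtrx{S}{S\alp}{\trs\alp S}{2a}$. First I would fix a \emph{maximal} $S$ and show $F_{S/2}\in J^{\cusp,M}_{k+n,S}$, i.e.\ that $c_F(h)$ depends only on $\det h$ (equivalently on $D_h$). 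Since $\frkf_h=\frkf_{D_h}$ already depends only on $D_h$, by (\ref{tag:13}) this reduces to the assertion that $\phi(d;h)$ depends only on $D_h$ for $h$ of this shape. This is exactly where maximality of $S$ is essential, as it forces the local structure of $2h$ at every prime to be pinned down by $D_h$ alone; the precise statement is part of the machinery recalled in \S\ref{sec:3}.

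Next I would invoke the main theorem of \cite{Y}: for maximal $S$ the map $I_{n,k,S}$ extends to a canonical isomorphism $\wtl I_{n,k,S}\colon\cala_S\xrightarrow{\sim}J^{\cusp,M}_{k+n,S}$, where $\cala_S$ is the appropriate plus space of weight $k+1/2$ attached to $S$. Applying the inverse to $F_{S/2}$ yields a unique $g_S\in\cala_S$ with $\wtl I_{n,k,S}(g_S)=F_{S/2}$. Comparing Fourier coefficients on both sides and feeding in the identity (\ref{tag:11}), I would express the coefficients of $g_S$ through the given function $c$: up to local factors at the primes dividing $\det S$, the $m$-th coefficient of $g_S$ is $c(m)$. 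When $\det S=2$ is available (the cases $n\equiv 0,1\pmod 4$) one has $\cala_S=S^+_{k+1/2}(4)$ and no correction occurs, so this already recovers the Kohnen--Kojima conclusion $c=c_g$ directly.

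The heart of the matter, and the step I expect to be the main obstacle, is to descend the family $\{g_S\}$ to a single $g\in S^+_{k+1/2}(4)$ when $\det S>2$ (the cases $n\equiv 2,3\pmod 4$). Here I would use the newform theory of \S\ref{sec:2}: the space $\cala_S$ contains the image of $S^+_{k+1/2}(4)$ as an oldform subspace, and the coefficient relation above shows that the value $c(m)$ attached to any fixed $m$ can be read off from $g_{S'}$ for a maximal $S'$ for which the relevant prime is unramified. Since every prime is good for some maximal $S'$, matching the computations coming from different $S$ forces the genuinely new components of $g_S$ at the ramified primes to vanish; hence $g_S$ is old, descends to level $4$, and, the descended coefficients being governed by the single $S$-independent function $c$, all descents coincide in one form $g\in S^+_{k+1/2}(4)$. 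Controlling the oldform/newform decomposition at the bad primes, verifying the vanishing of the new part, and checking that the Kohnen plus-space condition survives the descent is the technical core; by linearity and Hecke-stability of all spaces involved one may reduce to Hecke eigenforms, where multiplicity one makes the matching transparent.

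Finally I would read off the relation between $c$ and $c_g$. For $n$ odd ($n\equiv 1,3\pmod 4$) one has $\frkd_h\neq 1$ for all relevant $h$, so no square index is degenerate and the descent yields $c(m)=c_g(m)$ for every $m\in\frkD_k$; the case $n\equiv 0\pmod 4$ is the clean $\det S=2$ situation above. For $n\equiv 2\pmod 4$ the non-square coefficients are pinned down exactly, giving $c(m)=c_g(m)$ for $m$ not a square, while at square indices Lemma \ref{lem:11} shows $F$ is insensitive to the modification of Remark \ref{rem:11}; setting $a=c(1)-c_g(1)$ and using $\frkf_{f^2}=f$ gives $c(f^2)=c_g(f^2)+(c(1)-c_g(1))f^k$, and Lemma \ref{lem:11} guarantees $F=I_{n,k}(g)$ irrespective of this free parameter. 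Corollary \ref{cor:11} is then immediate: the inclusion $S^M_{k+n}(\Sp_{2n}(\ZZ))\supseteq\mathrm{Im}\,I_{n,k}$ is the definition of $I_{n,k}$ via (\ref{tag:12}), and the reverse inclusion follows from $F=I_{n,k}(g)$.
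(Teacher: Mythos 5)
Your proposal follows the same architecture as the paper: pass to the Fourier--Jacobi coefficients $F_{S/2}$ for maximal index (the paper specializes to $S=2B$ with $D_B=p$ prime, which exist by Lemma \ref{lem:42}), show $F_{S/2}\in J^{\cusp,M}_{k+n,2B}$ (Lemma \ref{lem:44}), combine the surjectivity result from \cite{Y} with the newform theory of \S\ref{sec:2} (Proposition \ref{prop:41}), descend the resulting forms to a single $g\in S^+_{k+1/2}(4)$ (Lemmas \ref{lem:45} and \ref{lem:46}), and finish with Lemma \ref{lem:11} and Remark \ref{rem:11} for square indices when $n\equiv 2\pmod 4$; your endgame paragraph is essentially the paper's. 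However, the step you yourself flag as the technical core --- ``matching the computations coming from different $S$ forces the genuinely new components of $g_S$ at the ramified primes to vanish,'' justified by reducing to Hecke eigenforms ``where multiplicity one makes the matching transparent'' --- is precisely Lemma \ref{lem:45}, and your proposed mechanism does not close it. Reducing to eigenforms is circular: Hecke-stability of $S^M_{k+n}(\Sp_{2n}(\ZZ))$ is not known a priori (it is a consequence of Corollary \ref{cor:11}), and although $F_{S/2}$ does decompose into eigenform lifts inside $J^{\cusp,M}_{k+n,S}$ by Theorem \ref{thm:31}, the defining relation (\ref{tag:13}) involves a single function $c$ tying all indices $S$ together, and that constraint does not decompose along with an eigen-decomposition which itself depends on $S$. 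Moreover, multiplicity one gives no bridge between the new-at-$p$ component of $F_{S/2}$ and anything visible in $F_{S'/2}$ for a different maximal $S'$: the only link between the two Jacobi forms is the numerical agreement of $c$ on the overlap of the readable index sets, and from the index $S$ one reads off only the combinations $c(m)-p^kc(p^{-2}m)$, and only for $m$ with $\bigl(\frac{(-1)^km}{p}\bigr)\neq 1$, not individual values of $c$.

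What actually closes the gap in the paper is a quantitative, analytic argument with no formal substitute. If the new component $h_B$ were nonzero, pick $m$ with $c_{h_B}(m)\neq 0$ and $\frkf_p(m)=0$ (possible by Proposition \ref{prop:21}); then Lemma \ref{lem:44} together with (\ref{tag:43}) forces
\[c(mp^{2f})=c_{g_B}(mp^{2f})+2^{-\bfb_p(m)}c_{h_B}(m)p^{(k-1/2)f}\lam_{p,mp^{2f}}(p^{-1/2}),\]
and since $\lam_{p,mp^{2f}}(p^{-1/2})$ grows like $p^{f/2}$, the second term grows like $p^{kf}$. Reading the same numbers $c(mp^{2f})$ through a second prime $q$ with $\bigl(\frac{(-1)^km}{q}\bigr)=-1$ (such $q$ exists because $(-1)^km$ is not a square) bounds them by $O(p^{(k-1/2+\eps)f})$, where the bound comes from (\ref{tag:42}) and (\ref{tag:43}) and Deligne's theorem $|\alp_q|=|\bet_q|=1$; the contradiction kills $h_B$. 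Note that the trivial Hecke bound on coefficients of a weight $k+1/2$ cusp form would only give $O(p^{(k+1/2)f})$, which is weaker than $p^{kf}$, so genuine Ramanujan--Petersson input is indispensable here; this is the missing idea in your proposal. (A minor further point: your reduction of $F_{S/2}\in J^{\cusp,M}_{k+n,S}$ to the claim that $\phi(d;h)$ itself depends only on $D_h$ asks for more than is needed or used; the paper instead shows, via Proposition \ref{prop:31}, (\ref{tag:42}) and \cite[\S 12]{Y}, that the full sum in (\ref{tag:13}) collapses to $c(D_h)-p^kc(p^{-2}D_h)$ for $h=B_{a,\alp}$, which is what membership in $J^{\cusp,M}_{k+n,2B}$ requires.)
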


%%%%%%%%%%%%%%%%%%%%%%%%%%%%%%%%%%%%%%%%%%%%%%%%%%%%%%%%%%%%%%%%%%%%%%%%%%%%%%%%%

\section{\bf Modular forms of half-integral weight}\label{sec:2}

We recall certain properties of newforms for the Kohnen plus space (see \cite{UY} for detail). 

Fix a positive integer $N$ such that $4^{-1}N$ is square-free. 
The space of cusp forms of weight $k+1/2$ with respect to $\Gam_0(N)$ is denoted by $S_{k+1/2}(N)$. 
We denote the $m$-th Fourier coefficient of $h\in S_{k+1/2}(N)$ by $c_h(m)$. 
The Kohnen plus space $S^+_{k+1/2}(N)$ consists of all functions $g\in S_{k+1/2}(N)$ such that $c_g(m)=0$ unless $m\in\frkD_k$. 

An operator $U_k(a^2)$ defined by 
\[\sum_{m\in\frkD_k}c(m)q^m|U_k(a^2)=\sum_{m\in\frkD_k}c(a^2m)q^m \]
maps $S_{k+1/2}^+(N)$ into itself 
if each prime divisor of $a$ is that of $4^{-1}N$. 

The space of newforms $S^{\new,+}_{k+1/2}(N)$ for $S^+_{k+1/2}(N)$ is the orthogonal complement of 
\[\sum_{p|4^{-1}N}\Big(S^+_{k+1/2}(p^{-1}N)+S^+_{k+1/2}(p^{-1}N)|U_k(p^2)\Big), \]
where $p$ extends over all prime divisors of $4^{-1}N$, in $S_{k+1/2}^+(N)$ with respect to the Petersson inner product. 
The space of newforms for $S_{2k}(\Gam_0(4^{-1}N))$ in the sense of Atkin-Lehner is denoted by $S^\new_{2k}(4^{-1}N)$. 
Let $\til{T}(p^2)$ (resp. $T(p)$) be the usual Hecke operator on the space of modular forms of half-integral (resp. integral) weight. 

The reader who wishes to learn the following theorem and proposition can consult \cite{UY}. 

\begin{theorem}\label{thm:21}

\begin{enumerate}
\renewcommand\labelenumi{(\theenumi)}
\item $S^+_{k+1/2}(N)=\oplus_{a,d\geq 1,\; ad|4^{-1}N}S^{\new,+}_{k+1/2}(4a)|U_k(d^2)$. 
\item There is a bijective correspondence, up to scalar multiple, between Hecke eigenforms in $S^\new_{2k}(4^{-1}N)$ and those in $S_{k+1/2}^{\new,+}(N)$ in the following way. 
If $f\in S^\new_{2k}(4^{-1}N)$ is a primitive form, i.e.,  
\begin{align*}
f|T(p)&=\ome_pf, & f|U(q)&=\ome_qf
\end{align*} 
for every prime number $p\nmid 4^{-1}N$ and prime divisor $q$ of $4^{-1}N$, then there is a non-zero Hecke eigenform $g\in S_{k+1/2}^{\new,+}(N)$ such that     
\begin{align*}
g|\til{T}(p^2)&=\ome_pg, & g|U_k(q^2)&=\ome_qg
\end{align*} 
for every prime number $p\nmid 4^{-1}N$ and prime divisor $q$ of $4^{-1}N$. 
\end{enumerate}
\end{theorem}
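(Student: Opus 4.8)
The plan is to transport both assertions to integral weight via a Hecke-equivariant correspondence and then read them off from the Atkin--Lehner--Li theory of newforms, which is already available for $S_{2k}(4^{-1}N)$. Throughout I regard $S^+_{k+1/2}(N)$ as a module over the commutative algebra generated by $\til T(p^2)$ for $p\nmid 4^{-1}N$ and by $U_k(q^2)$ for $q\mid 4^{-1}N$, and $S_{2k}(4^{-1}N)$ as a module over the algebra generated by the $T(p)$ and $U(q)$ for the same primes. The structural input is Kohnen's isomorphism, generalized to level $N$: a Hecke-equivariant injection $\mathrm{Sh}\colon S^+_{k+1/2}(N)\hookrightarrow S_{2k}(4^{-1}N)$ intertwining $\til T(p^2)$ with $T(p)$ and $U_k(q^2)$ with $U(q)$, whose image admits an explicit description. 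Both parts of the theorem will then follow from the behaviour of $\mathrm{Sh}$ under the degeneracy and level-raising operators.

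For part (1) I would mirror the oldform/newform decomposition of $S_{2k}(4^{-1}N)$. For each prime $q\mid 4^{-1}N$ the plus space carries two natural degeneracy maps from level $q^{-1}N$ into level $N$, namely the inclusion and $U_k(q^2)$, and the oldspace $\sum_{q\mid 4^{-1}N}\bigl(S^+_{k+1/2}(q^{-1}N)+S^+_{k+1/2}(q^{-1}N)|U_k(q^2)\bigr)$ is by definition the orthogonal complement of $S^{\new,+}_{k+1/2}(N)$. Arguing by induction on the number of prime divisors of $4^{-1}N$, it suffices to prove that the subspaces $S^{\new,+}_{k+1/2}(4a)|U_k(d^2)$ with $ad\mid 4^{-1}N$ are linearly independent and together span; spanning is the definition of the oldspace unwound inductively, and independence I would obtain by computing the action of $U_k(q^2)$ on a Hecke newform and showing that the two images from level $q^{-1}N$ meet trivially. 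Under $\mathrm{Sh}$ this independence is precisely the corresponding statement of Atkin--Lehner--Li for $S_{2k}(4^{-1}N)$ (equivalently, the classical fact that the eigenvalue system of a newform cannot coincide at almost all primes with that of a lower-level form), so the decomposition descends from the integral-weight side.

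For part (2) I would construct $g$ from the primitive form $f$ by the theta (Shimura--Waldspurger) lift, or equivalently by Kohnen's explicit formula expressing the Fourier coefficients of a half-integral eigenform through the critical $L$-values of $f$. Hecke-equivariance of the lift gives at once the identities $g|\til T(p^2)=\ome_p g$ for $p\nmid 4^{-1}N$ and $g|U_k(q^2)=\ome_q g$ for $q\mid 4^{-1}N$. It then remains to check that $g$ is nonzero, lies in the \emph{plus} space, and is \emph{new} of level exactly $N$: plus-space membership holds because the plus-space projection is Hecke-equivariant and the lift is normalized to land in its image, while newness and the exact level are matched conductor-by-conductor, the local conductor of $f$ at each ramified prime $q$ fixing the precise power of $q$ dividing the level of $g$. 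Bijectivity up to scalar then follows from multiplicity one: on the integral side it is Atkin--Lehner--Li, and on the half-integral side it is the assertion that each system of $(\til T(p^2),U_k(q^2))$-eigenvalues occurs in $S^{\new,+}_{k+1/2}(N)$ at most once; injectivity of $f\mapsto g$ is then immediate, and surjectivity follows from a dimension comparison between part (1) and the Atkin--Lehner decomposition of $S_{2k}(4^{-1}N)$.

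The main obstacle is the analysis at the prime $2$, namely the compatibility of the plus-space condition with the correspondence, together with the precise eigenvalue matching at the ramified odd primes $q\mid 4^{-1}N$, where one must establish $U_k(q^2)\mapsto U(q)$ with the correct normalization and sign and not merely up to an unknown constant. The plus-space condition singles out a one-dimensional space of distinguished local vectors at $2$ in the relevant representation of the metaplectic group, and verifying that $\mathrm{Sh}$ respects this vector, while simultaneously identifying the $U_k(q^2)$-eigenvalue of the resulting newform, is most cleanly carried out through the local Shimura--Waldspurger correspondence and the theory of local newvectors rather than by direct manipulation of Fourier coefficients. This local input is exactly what makes the clean Hecke-module statement of the theorem possible and is the technical heart deferred to \cite{UY}.
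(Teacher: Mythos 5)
First, a point of reference: the paper itself contains no proof of Theorem \ref{thm:21} --- it is imported wholesale from the preprint \cite{UY} (``The reader who wishes to learn the following theorem and proposition can consult \cite{UY}''), so there is no argument in the paper to compare yours against. Judged on its own terms, your proposal correctly describes the expected \emph{shape} of the theory, but it is not a proof, because its pivotal object --- the Hecke-equivariant injection $\mathrm{Sh}\colon S^+_{k+1/2}(N)\hookrightarrow S_{2k}(4^{-1}N)$ intertwining $\til{T}(p^2)$ with $T(p)$ and $U_k(q^2)$ with $U(q)$, which you call ``Kohnen's isomorphism, generalized to level $N$'' --- is essentially the statement being proved, not an available input. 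Kohnen's newform theory exists for $4^{-1}N$ \emph{odd} and square-free; the whole point of \cite{UY}, and the reason this paper must invoke it, is the extension to even square-free $4^{-1}N$ (i.e.\ $8\mid N$), which is unavoidable here because Lemma \ref{lem:42} and Proposition \ref{prop:41} require levels $4p$ with $p=2$ allowed. You concede in your final paragraph that the analysis at the prime $2$ and the eigenvalue matching at ramified primes are ``deferred to \cite{UY}''; but that deferred part \emph{is} the theorem, so the argument is circular.

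Concretely, two steps collapse without that input. In part (1), spanning of the old space does follow by your induction (using $U_k(d^2)U_k(q^2)=U_k((dq)^2)$), but directness of the sum cannot be ``descended from the integral-weight side'' until an injective Hecke-equivariant map is defined on all of $S^+_{k+1/2}(N)$ --- equivalently, until one has strong multiplicity one in the plus space, which is again the theorem. In part (2), your surjectivity argument ``by dimension comparison'' needs the inequality $\dim S^+_{k+1/2}(N)\le\dim S_{2k}(4^{-1}N)$; the theta/Waldspurger construction only yields the opposite inequality, and even that much requires nonvanishing of the lift, plus-space membership, and the exact level of the image, none of which you establish (each is a genuine local computation at $2$ and at the ramified primes $q$). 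The missing dimension bound is precisely what the trace identities, respectively the local newvector analysis, of \cite{UY} supply. In short: the scaffolding is right, but every load-bearing beam is borrowed from the reference the theorem is quoted from.
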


\begin{proposition}\label{prop:21}
Let $g\in S_{k+1/2}^{\new,+}(N)$, $p$ a prime divisor of $4^{-1}N$, and $\eps$ either $1$ or $-1$. 
Then the following conditions are equivalent: 
\begin{enumerate}
\renewcommand\labelenumi{(\theenumi)}
\item[(\roman{one})] if $\Psi_p((-1)^kn)=\eps$, then $c_g(n)=0$; 
\item[(\roman{two})] $g|U_k(p^2)=\eps p^{k-1}g$. 
\end{enumerate}
\end{proposition}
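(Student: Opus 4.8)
The plan is to route the equivalence through the correspondence of Theorem~\ref{thm:21}(2). As $g\in S^{\new,+}_{k+1/2}(N)$ is a Hecke eigenform, it matches a primitive form $f\in S^{\new}_{2k}(4^{-1}N)$, and the $U_k(p^2)$-eigenvalue $\omega_p$ of $g$ equals the $U(p)$-eigenvalue of $f$. Since $4^{-1}N$ is square-free, $p$ divides it exactly once, so the local component of $f$ at $p$ is an unramified twist of the Steinberg representation and $\omega_p=\pm p^{k-1}$; writing $\delta_p=\omega_p p^{-(k-1)}\in\{\pm1\}$, condition (\roman{two}) reads $\delta_p=\eps$. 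Next I would record that $g|U_k(p^2)=\omega_p g$ is equivalent to the recursion $c_g(p^2n)=\omega_p c_g(n)$ for all $n$. As $\omega_p\ne0$, this determines every coefficient of even $p$-valuation from the coefficients at indices $n$ with $p\nmid n$, via $c_g(p^{2a}m)=\omega_p^a c_g(m)$; moreover $g$ must have a nonzero coefficient at some index prime to $p$, since otherwise $g$ would be supported on indices divisible by $p$ and hence lie in the old subspace, contradicting that it is a newform.

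The analytic input is the explicit Fourier-coefficient formula underpinning the newform theory of \cite{UY}, a Waldspurger/Kohnen--Zagier-type identity. For $D=(-1)^kn$ a fundamental discriminant it expresses
\[
|c_g(n)|^2=\kappa\cdot|D|^{k-1/2}\,L(k,f,\chi_D)\prod_{q\mid 4^{-1}N}e_q(f,D)
\]
with $\kappa\ne0$, where the local factor $e_p(f,D)$ at our prime depends only on the local representation of $f$ (hence on $\delta_p$) and on $\Psi_p((-1)^kn)$; the Hecke relations at primes other than $p$ then reduce the vanishing of $c_g(n)$ for arbitrary $n$ prime to $p$ to this fundamental case. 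The decisive local computation is that, for this twist of Steinberg against the unramified quadratic character attached to $D$, one has $e_p(f,D)=0$ exactly when $\Psi_p((-1)^kn)=\delta_p$. Granting this, (\roman{two})$\Rightarrow$(\roman{one}) is immediate: if $\delta_p=\eps$ then $c_g(n)=0$ for every $n$ prime to $p$ with $\Psi_p((-1)^kn)=\eps$, and the recursion $c_g(p^{2a}m)=\omega_p^ac_g(m)$ propagates this to all $n$ with $\Psi_p((-1)^kn)=\eps$ (which forces $\ord_p n$ even).

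For (\roman{one})$\Rightarrow$(\roman{two}) I would argue by contradiction. The sign $\delta_p$ is some element of $\{\pm1\}$, and the local vanishing just invoked already gives $c_g(n)=0$ for all $n$ prime to $p$ with $\Psi_p((-1)^kn)=\delta_p$. If $\delta_p=-\eps$, then combining this with hypothesis (\roman{one}), which kills the coefficients with $\Psi_p((-1)^kn)=\eps$, would annihilate $c_g(n)$ for every $n$ prime to $p$, forcing $g=0$ by the reduction of the first paragraph---a contradiction. Hence $\delta_p=\eps$, i.e.\ (\roman{two}). The main obstacle is precisely the local factor computation matching the Atkin--Lehner sign $\delta_p$ to the value of $\Psi_p((-1)^kn)$ that forces $c_g(n)$ to vanish; the genuinely delicate instance is the dyadic case $p=2$ (arising when $8\mid N$), where the plus-space condition modulo $4$ and the metaplectic local analysis at $2$ must be carried out directly rather than through the Legendre-symbol description available for odd $p$.
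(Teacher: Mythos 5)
There is nothing in the paper to compare your argument against: Proposition \ref{prop:21} is recalled from the preprint \cite{UY} without proof (``The reader who wishes to learn the following theorem and proposition can consult \cite{UY}''). Your proposal must therefore stand on its own, and as a self-contained argument it has genuine gaps. The first is structural: you assume from the outset that $g$ is a Hecke eigenform, but the proposition concerns an arbitrary element of $S^{\new,+}_{k+1/2}(N)$. Condition (ii) only makes $g$ a $U_k(p^2)$-eigenform, and condition (i) gives no eigenform structure at all, so both directions require a reduction to eigenforms (decompose $g$ into joint eigenforms and, for (i)$\Rightarrow$(ii), show that the component with the wrong $U_k(p^2)$-eigenvalue vanishes); this reduction is missing.

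More seriously, the two load-bearing steps are granted rather than proved, and the first of them is circular. The ``decisive local computation''---that the local factor $e_p(f,D)$ in a Waldspurger/Kohnen--Zagier formula for the newforms of \cite{UY} vanishes exactly when $\Psi_p((-1)^kn)=\delta_p$---is essentially the statement of the proposition itself, and \cite{UY} is precisely the source the paper cites for it; moreover you concede that the dyadic case is not carried out, even though $p=2$ is genuinely in scope here (since $4^{-1}N$ is only assumed square-free, it may be even; handling this case is the whole point of \cite{UY} as opposed to Kohnen's classical odd-level theory). Second, your contradiction step rests on the assertion that a form supported on indices divisible by $p$ lies in the old subspace. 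This is not a formality: the recursion $c_g(p^2n)=\omega_p c_g(n)$ controls only coefficients of even $p$-valuation, so annihilating all $c_g(n)$ with $p\nmid n$ leaves $g$ possibly supported on $\{n\;:\;\ord_p n\ \text{odd}\}$ (for odd $p$, exactly the indices with $\Psi_p((-1)^kn)=0$), and ruling out such a nonzero element of the newform space is a nontrivial structural fact of exactly the kind the theory of \cite{UY} exists to establish. In short, the architecture (correspondence with a primitive form, $\omega_p=\pm p^{k-1}$, a vanishing criterion, a contradiction argument) is plausible, but every decisive step is outsourced to unproved claims, one of which presupposes the result.
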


%%%%%%%%%%%%%%%%%%%%%%%%%%%%%%%%%%%%%%%%%%%%%%%%%%%%%%%%%%%%%%%%%%%%%%%%%%%%%%%%%

\section{\bf Results of \cite{Y}}\label{sec:3}

It is useful to recall certain properties of Jacobi forms before we turn to the proof of Theorem \ref{thm:12} (see \cite{Y} for detail). 

Let $S$ be a positive definite symmetric even integral matrix of rank $2n-1$. 
Put $L=\ZZ^{2n-1}$ and $L^*=S^{-1}L$. Let $\calt_S^+$ be the set of all pairs $(a, \alp)\in \ZZ\times L^*$ such that $a>S[\alp]/2$. 

A Jacobi cusp form $\vPh$ of weight $\kap$ with index $S$ is a holomorphic function on $\frkH\times\CC^{2n-1}$ which satisfies the following conditions:
\begin{enumerate}
\renewcommand\labelenumi{(\theenumi)}
\item[(\roman{one})] for every $\alp=\Left a & b \\ c & d\Right\in\SL_2(\ZZ)$, we have
\[\vPh(\alp\tau,w(c\tau+d)^{-1})=(c\tau+d)^\kap\bfe(c(c\tau+d)^{-1}S[w]/2)\vPh(\tau,w); \]
\item[(\roman{two})] for every $\xi$, $\eta\in L$, we have 
\[\vPh(\tau,w+\xi\cdot\tau+\eta)=\bfe(-S(\xi,w)-S[\xi]\cdot\tau/2)\vPh(\tau,w); \]
\item[(\roman{thr})] $\vPh$ has a Fourier expansion of the form 
\[\vPh(\tau, w)=\sum_{(a, \alp)\in\calt_S^+}c_\vPh(a, \alp)q^a\bfe(S(\alp, w)). \]
\end{enumerate}
Let $J^\cusp_{\kap,S}$ be the space of Jacobi cusp forms of weight $\kap$ with index $S$. 

For a place $v$ of $\QQ$, we put  
\[\eta_v(S)=h_v(S)(\det S, (-1)^{n-1}\det S)_v(-1,-1)_v^{n(n-1)/2}. \]
Here, let $(\;,\;)_v$ be the Hilbert symbol over $\QQ_v$ and $h_v$ the Hasse invariant (for the definition of the Hasse invariant, see \cite{Ki}). 

\begin{remark}\label{rem:31}
As is well-known, 
\[n=2\nu_p+2-\eta_p(S), \]
where $\nu_p$ is the Witt index of $S$ over $\QQ_p$ for each prime number $p$. 
\end{remark}

Let $q_p$ be the quadratic form on $V_p=L/pL$ defined by 
\[q_p[x]=S[x]/2\pmod p. \] 
We denote by $s_p(S)$ the dimension of the radical of $(V_p,q_p)$. 

We hereafter suppose that $L$ is a maximal integral lattice with respect to $S$. Then $s_p(S)\leq 2$. 
Put $\frkS_i=\{p\;|\;s_p(S)=i\}$. 
Let $b_S$ and $d_S$ be the product of rational primes in $\frkS_1$ and in $\frkS_2$ respectively. 
For a non-zero element $N\in\QQ$, the Laurent polynomial $l_{p,S,N}$ is defined by 
\[l_{p,S,N}=\begin{cases}
\lam_{p,N}
&\tx{if $p\in\frkS_0$. }\\
\lam_{p,N}+\eta_p(S)p^{1/2}\lam_{p,p^{-2}N}
&\tx{if $p\in\frkS_1$. }\\
\lam_{p,N}-\biggl(\dfrac{(-1)^np^{-2}N}{p}\biggl)p^{1/2}\lam_{p,p^{-2}N}-p\lam_{p,p^{-4}N}
&\tx{if $p\in\frkS_2$. }
\end{cases}\]
\begin{definition}
The space $J^{\cusp,M}_{\kap,S}$ is defined in the following way: 
$\vPh\in J^\cusp_{\kap,S}$ is an element of $J^{\cusp,M}_{\kap,S}$ if there exists a function $C:\frkD_k\to\CC$ such that $c_\vPh(a,\alp)=C((a-S[\alp]/2)\det S)$ for every $(a,\alp)\in\calt_S^+$. 
\end{definition}

The reader who wishes to learn the following theorem and proposition can consult \cite{Y}. 

\begin{theorem}\label{thm:31}
Let $k$ be a positive integer with $k\equiv n\pmod 2$. 
Let $b$ (resp. $d$) be a positive divisor of $b_S$ (resp. $d_S$). 
Let $f\in S^\new_{2k}(bd)$ be a primitive form, the $L$-function of which is given by
\[\prod_{p|bd}(1-\alp_pp^{k-1/2-s})^{-1}\prod_{p\nmid bd}(1-\alp_pp^{k-1/2-s})^{-1}(1-\alp_p^{-1}p^{k-1/2-s})^{-1}. \]
Suppose that $\alp_p=-\eta_p(S)p^{-1/2}$ for each prime divisor $p$ of $b$. Let $g\in S^+_{k+1/2}(4bd)$ be a corresponding cusp form under the Shimura correspondence. 
We define a function $\vPh$ on $\frkH\times\CC^{2n-1}$ by
\[\vPh(\tau,w)=\sum_{(a,\alp)}c_\vPh(a,\alp)q^a\bfe(S(\alp,w)), \]
where 
\[c_\vPh(a,\alp)=2^{-\bfb_{bd}(\pal_{a,\alp})}c_g(\frkd_{\pal_{a,\alp}})\frkf_{\pal_{a,\alp}}^{k-1/2}\prod_pl_{p,S,\pal_{a,\alp}}(\alp_p) \]
and $\pal_{a,\alp}=(a-S[\alp]/2)\det S$. 
Here, the number of prime divisors of $bd$ such that $\Psi_p((-1)^kN)\neq 0$ is denoted by $\bfb_{bd}(N)$ for a positive integer $N$. 
Then $\vPh$ is a cuspidal Hecke eigenform in $J^{\cusp,M}_{k+n,S}$. 
Moreover, the space $J^{\cusp,M}_{k+n,S}$ is spanned by these Jacobi cusp forms when $b$ and $d$ run through all positive divisors of $b_S$ and of $d_S$ respectively. 
\end{theorem}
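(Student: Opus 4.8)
The plan is to realize $\vPh$ via its theta decomposition and transfer every assertion to the theory of vector-valued modular forms for the Weil representation $\rho_S$ of $\SL_2(\ZZ)$ on $\CC[L^*/L]$. First I would write
\[\vPh=\sum_{\mu\in L^*/L}h_\mu\,\theta_{S,\mu},\]
where $\theta_{S,\mu}$ is the theta series of the coset $\mu$ (of weight $(2n-1)/2$) and $h_\mu(\tau)=\sum_N c_\vPh(a,\alp)\,q^N$ with $N=a-S[\alp]/2$ and $\alp$ in the class $\mu$. Since $c_\vPh(a,\alp)=C(\pal_{a,\alp})$ depends only on $\pal_{a,\alp}=N\det S$, every component is a slice $h_\mu(\tau)=\sum_N C(N\det S)q^N$ of one master function; this is precisely the Maass condition, and it is what makes membership in $J^{\cusp,M}_{k+n,S}$ automatic once $\vPh$ is known to be a Jacobi cusp form. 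The content of the first assertion is therefore that $(h_\mu)_\mu$ is a holomorphic cusp form of weight $k+1/2$ transforming under the contragredient $\rho_S^{*}$.

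To verify this transformation law I would read the coefficient formula as an explicit recipe extending the fundamental-discriminant data of $g$ to all discriminants: $C(\pal)=2^{-\bfb_{bd}(\pal)}c_g(\frkd_{\pal})\frkf_{\pal}^{k-1/2}\prod_p l_{p,S,\pal}(\alp_p)$ builds $C$ from $c_g(\frkd_{\pal})$ by the local factors $l_{p,S,N}$, in exact analogy with Kohnen's passage $c_F(h)=\sum_{d\mid\frkf_h}d^{k-1}\phi(d;h)c_g(d^{-2}D_h)$ in Theorem \ref{thm:11}. The point is that the $l_{p,S,N}$ are the local transfer factors for $\rho_S$: at $p\in\frkS_0$ one has $l_{p,S,N}=\lam_{p,N}$, the combination produced by the Eichler--Zagier correspondence for an unramified lattice, while the extra terms at $p\in\frkS_1$ and $p\in\frkS_2$ carrying $\eta_p(S)$ and $p$ compensate for the radical of $(V_p,q_p)$, whose dimension $s_p(S)\le 2$ is controlled by maximality of $L$ and by Remark \ref{rem:31}. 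Assembling these local contributions and applying Poisson summation to the $\theta_{S,\mu}$ shows that $(h_\mu)_\mu$ satisfies the required $\rho_S^{*}$-transformation; holomorphy and the cusp condition follow from $g$ being a cusp form and from the support of $C$ on $\frkD_k$.

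For the eigenform assertion I would compute the Jacobi Hecke operators on Fourier coefficients; through the theta decomposition these descend to the half-integral weight operators $\til{T}(p^2)$ and $U_k(p^2)$ acting on $g$. Because $l_e(X)(X+X^{-1})=l_{e+1}(X)+l_{e-1}(X)$, the polynomials $\lam_{p,N}(\alp_p)$ are eigenvectors at the good primes with eigenvalue $\alp_p+\alp_p^{-1}$, matching the Satake parameter of $f$; at a bad prime $p\mid bd$ the normalization $\alp_p=-\eta_p(S)p^{-1/2}$ together with Proposition \ref{prop:21} forces the vanishing of the appropriate $c_g(n)$ and fixes the $U_k(p^2)$-eigenvalue. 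Hence the eigensystem of $\vPh$ is that of $f$, and $\vPh\in J^{\cusp,M}_{k+n,S}$.

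Finally, for spanning I would argue by exhaustion of Hecke eigensystems. Theorem \ref{thm:21}(1) decomposes $S^+_{k+1/2}(4bd)$ into newspaces $S^{\new,+}_{k+1/2}(4a)\mid U_k(d^2)$, Theorem \ref{thm:21}(2) matches the relevant eigenforms with primitive $f\in S^\new_{2k}(bd)$, and Proposition \ref{prop:21} singles out exactly the $g$ whose $U_k(p^2)$-eigenvalues correspond to $\alp_p=-\eta_p(S)p^{-1/2}$, i.e. those admissible in the construction. Distinct $f$ yield lifts with distinct eigensystems, so the $\vPh$ are linearly independent Hecke eigenforms in $J^{\cusp,M}_{k+n,S}$; a dimension count---equivalently, an analysis showing that every Hecke eigensystem occurring on $J^{\cusp,M}_{k+n,S}$ is that of some admissible $f$---then shows they span. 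The main obstacle throughout is the second paragraph: proving that the $\frkS_1$- and $\frkS_2$-corrections in $l_{p,S,N}$ are genuinely the local Weil-representation transfer factors, which amounts to matching them with local Siegel series and local densities at the ramified primes, and is where maximality of $L$ (giving $s_p(S)\le 2$) is indispensable.
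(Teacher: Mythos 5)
You should know at the outset that this paper contains no proof of Theorem \ref{thm:31} to compare against: the theorem is quoted verbatim from \cite{Y} (``The reader who wishes to learn the following theorem and proposition can consult \cite{Y}''), and the present paper only uses it as a black box in \S\ref{sec:4}. So your proposal has to be judged as a reconstruction of the proof in \cite{Y}. Your skeleton is sensible as far as it goes --- theta decomposition $\vPh=\sum_\mu h_\mu\theta_{S,\mu}$, the (correct) observation that the Maass condition is automatic because $c_\vPh(a,\alp)$ depends only on $\pal_{a,\alp}$, the recursion $l_e(X)(X+X^{-1})=l_{e+1}(X)+l_{e-1}(X)$ for the eigenform property, and Theorem \ref{thm:21} together with Proposition \ref{prop:21} for organizing the spanning statement.

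The genuine gap is at the central step, and it is the whole theorem. You assert that ``assembling these local contributions and applying Poisson summation to the $\theta_{S,\mu}$ shows that $(h_\mu)_\mu$ satisfies the required $\rho_S^{*}$-transformation.'' Poisson summation proves the transformation law of the theta series $\theta_{S,\mu}$; it says nothing whatever about the $h_\mu$, which at this point of your argument are nothing but formal $q$-series manufactured from the recipe $C(N)=2^{-\bfb_{bd}(N)}c_g(\frkd_N)\frkf_N^{k-1/2}\prod_p l_{p,S,N}(\alp_p)$. Proving that these particular series are modular of weight $k+1/2$ for $\rho_S^{*}$ is precisely the assertion being proved, so your argument is circular exactly where the work lies; calling the $l_{p,S,N}$ ``local transfer factors for $\rho_S$'' is a name, not a proof. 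The known way to close this gap (Ikeda's method, carried out in the Jacobi setting in \cite{Y}) is a comparison with Eisenstein series: the same coefficient shape with $c_g$ replaced by the Fourier coefficients of Cohen's Eisenstein series of weight $k+1/2$ yields the Fourier--Jacobi coefficients of a Siegel Eisenstein series, whose modularity is known; the resulting identities --- ultimately resting on the functional equation $\til{F}_{p,h}(X)=\til{F}_{p,h}(X^{-1})$ of the Siegel series quoted in the Introduction --- are identities of Laurent polynomials valid for arbitrary parameters $\alp_p$, and hence transfer modularity from the Eisenstein input to the cuspidal input $g$. Nothing in your sketch plays this role. Separately, your spanning argument ends with ``a dimension count \dots then shows they span'' without producing either a dimension formula for $J^{\cusp,M}_{k+n,S}$ or an argument that every eigensystem occurring there comes from an admissible $f$; in \cite{Y} this is the statement that the map from the relevant sum of plus spaces onto $J^{\cusp,M}_{k+n,S}$ is an isomorphism, which is again substantive content you would need to supply rather than assume.
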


The $S/2$-th Fourier-Jacobi coefficient $F_{S/2}$ of $F\in S_\kap(\Sp_{2n}(\ZZ))$ is defined by
\[F_{S/2}(\tau,w)=\sum_{(a,\alp)\in\calt_S^+}
c_F\Big(2^{-1}\mtrx{S}{S\alp}{\trs\alp S}{2a}\Big)q^a\bfe(S(\alp,w)). \]
As is well-known, $F_{S/2}$ is an element of $J^\cusp_{\kap,S}$. 

\begin{proposition}\label{prop:31}
Under the notation as in Theorem \ref{thm:31}, $\vPh$ is equal to the $S/2$-th Fourier-Jacobi coefficient of the Ikeda lift of $f$ if $b=d=1$. 
\end{proposition}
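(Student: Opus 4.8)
The plan is to prove the identity by matching Fourier coefficients. Write $F=I_{n,k}(g)$ for the Ikeda lift of the level-one form $f$. By definition of the Fourier--Jacobi coefficient, $c_{F_{S/2}}(a,\alp)=c_F(h)$ where $2h=\mtrx{S}{S\alp}{\trs\alp S}{2a}$, for every $(a,\alp)\in\calt_S^+$; since both $\vPh$ and $F_{S/2}$ lie in $J^\cusp_{k+n,S}$ it suffices to show $c_\vPh(a,\alp)=c_F(h)$ for all such $(a,\alp)$. For the right-hand side I would invoke Ikeda's formula in the shape recorded in Theorem \ref{thm:11}, namely $c_F(h)=c_g(\frkd_h)\frkf_h^{k-1/2}\prod_p\til{F}_{p,h}(\alp_p)$, in which the $\alp_p$ are exactly the Satake parameters of $f$ that also occur in Theorem \ref{thm:31}.

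When $b=d=1$ the normalizing factor $2^{-\bfb_{bd}(\pal_{a,\alp})}$ in Theorem \ref{thm:31} is $1$, so the desired equality splits into two parts. First comes the comparison of global invariants: using the block-determinant relation $\det(2h)=\det S\cdot(2a-S[\alp])$ one identifies the fundamental discriminant and the conductor attached to $h$ with $\frkd_{\pal_{a,\alp}}$ and $\frkf_{\pal_{a,\alp}}$, so that the scalar factors $c_g(\frkd)\,\frkf^{\,k-1/2}$ on the two sides coincide. Second, and this is the core of the argument, one must prove the local identity $\til{F}_{p,h}(\alp_p)=l_{p,S,\pal_{a,\alp}}(\alp_p)$ at every prime $p$.

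The local identity is where I expect the real work to lie. The idea is to compute the Siegel series $\til{F}_{p,h}$ of the genus-$2n$ matrix $h$ directly from the block decomposition $2h=\mtrx{S}{S\alp}{\trs\alp S}{2a}$, organizing the calculation according to the reduction type of the index $S$ at $p$. For $p\in\frkS_0$, where $(V_p,q_p)$ is nondegenerate, the series should collapse to $\lam_{p,\pal_{a,\alp}}$, matching $l_{p,S,\pal_{a,\alp}}$ term by term. For $p\in\frkS_1\cup\frkS_2$, where the reduction of $S$ has a one- or two-dimensional radical, the extra summands built from $\lam_{p,p^{-2}\pal_{a,\alp}}$ and $\lam_{p,p^{-4}\pal_{a,\alp}}$ that define $l_{p,S,N}$ must be recovered from the finer local structure; here one has to track how the local density over $\ZZ_p$ splits off the contribution of the radical and how the invariants $s_p(S)$ and $\eta_p(S)$ enter the coefficients. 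The functional equation $\til{F}_{p,h}(X)=\til{F}_{p,h}(X^{-1})$ together with a degree count should reduce this to finitely many comparisons. This computation is essentially the local input underlying Theorem \ref{thm:31}, and it is the step I expect to be the main obstacle.

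Granting the local identities, taking the product over $p$ and combining with the matching of $c_g(\frkd_h)\frkf_h^{k-1/2}$ yields $c_\vPh(a,\alp)=c_F(h)$ for every $(a,\alp)\in\calt_S^+$, hence $\vPh=F_{S/2}$. A more structural alternative, which avoids the explicit Siegel-series computation, is to observe that $\vPh$ is a Hecke eigenform by Theorem \ref{thm:31} while $F_{S/2}$ inherits, via the Fourier--Jacobi expansion, the Hecke eigenvalues of the Ikeda lift $F$; a multiplicity-one statement for the eigenform of $J^{\cusp,M}_{k+n,S}$ attached to the level-one $f$ (which follows from the spanning assertion of Theorem \ref{thm:31} and the correspondence of Theorem \ref{thm:21}) then gives $\vPh=c\cdot F_{S/2}$, and evaluating at a single $(a,\alp)$ with $\frkf_{\pal_{a,\alp}}=1$, where every local factor degenerates to $1$ and both sides equal $c_g(\frkd_{\pal_{a,\alp}})$, forces $c=1$.
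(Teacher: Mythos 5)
Your reduction of the proposition to the prime-by-prime identity $\til{F}_{p,h}(\alp_p)=l_{p,S,\pal_{a,\alp}}(\alp_p)$ is the right framing, but it is also exactly where your proof stops: you flag this local comparison as ``the step I expect to be the main obstacle'' and never carry it out. That identity is not a routine verification; it is the entire mathematical content of the proposition. Note that the paper itself offers no proof of Proposition \ref{prop:31} at all --- it refers the reader to \cite{Y} for both Theorem \ref{thm:31} and the proposition, and the proof of Lemma \ref{lem:44} likewise defers to \cite[\S 12]{Y}. What lives in \cite{Y} is precisely the computation you postpone: an evaluation of the local Siegel series $F_{p,h}$ for $h$ whose upper-left block $S/2$ is maximal, split according to $s_p(S)\in\{0,1,2\}$, which is what produces the three-case definition of $l_{p,S,N}$. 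Your remark that the functional equation plus a degree count ``should reduce this to finitely many comparisons'' gains nothing: the functional equation is already built into the definition of $\til{F}_{p,h}$, a symmetric Laurent polynomial still has $\ord_p\frkf_h+1$ undetermined coefficients, and the ``finitely many comparisons'' are exactly the local density computations constituting the theorem. A smaller but real point: with the paper's literal definitions, $\det(2h)=\det S\cdot(2a-S[\alp])=2\pal_{a,\alp}$, so the identification $\frkd_h=\frkd_{\pal_{a,\alp}}$, $\frkf_h=\frkf_{\pal_{a,\alp}}$ that you assert is off by a factor of $2$; there is a normalization discrepancy between the Introduction and \S\ref{sec:3} that your argument silently passes over rather than resolves.

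Your fallback route via eigenforms and multiplicity one has gaps that are not cosmetic either. First, nothing stated in the paper (or standard without proof) says that the Fourier--Jacobi coefficient $F_{S/2}$ of a Siegel Hecke eigenform is an eigenform of the Jacobi Hecke algebra with matching eigenvalues; that compatibility is itself a nontrivial theorem. Second, the spanning assertion of Theorem \ref{thm:31} gives multiplicity one only inside $J^{\cusp,M}_{k+n,S}$, and you do not know a priori that $F_{S/2}$ lies in that subspace: showing that $c_F\bigl(2^{-1}\mtrx{S}{S\alp}{\trs\alp S}{2a}\bigr)$ depends only on $\pal_{a,\alp}$ is essentially the statement being proved, since by Ikeda's formula this requires knowing that $\til{F}_{p,h}$ depends only on $D_h$ and not on the full $\GL_{2n}(\ZZ_p)$-class of $h$ --- again the unproven local identity. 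Third, the final normalization needs some achievable fundamental discriminant with $c_g(\frkd)\neq 0$, which you assume without argument. So both of your routes, as written, presuppose the result or an equivalent form of it.
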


%%%%%%%%%%%%%%%%%%%%%%%%%%%%%%%%%%%%%%%%%%%%%%%%%%%%%%%%%%%%%%%%%%%%%%%%%%%%%%%%%

\section{\bf Proof of Main Theorem}\label{sec:4}

\begin{lemma}\label{lem:41}
Let $B\in T_{2n-1}^+$. Assume that $D_B$ is square-free. Then
\[\prod_{p|D_B}\eta_p(B)=\begin{cases}
1&\tx{if $n\equiv 0,1\pmod 4$. }\\
-1&\tx{if $n\equiv 2,3\pmod 4$. }
\end{cases}\]
\end{lemma}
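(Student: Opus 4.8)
The plan is to evaluate the product through the global product formula (Hilbert reciprocity) for the local factors $\eta_v$, and to read off the answer at the archimedean place. A preliminary observation that simplifies everything is that $\eta_v$ is \emph{scaling invariant} on forms of odd rank $2n-1$: for any $c\in\QQ^\times$ one checks $\eta_v(cB)=\eta_v(B)$ by a short Hilbert-symbol computation, in which the factor $(\det B)^{2n}$ entering the Hasse symbol is a square and the two contributions $(c,-1)_v^{n}$ (one from $h_v$, one from the middle symbol) cancel. Hence I may replace $B$ by the even integral matrix $2B$. Since $2B$ is even of odd rank, its reduction mod $2$ is alternating of odd size and so $\det(2B)$ is even; one has $\det(2B)=2D_B$, i.e. $D_B=\det(2B)/2$. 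Because each $\eta_v(B)$ is a product of Hilbert symbols and Hasse invariants, and $\prod_v h_v(B)=1$ while $\prod_v(a,b)_v=1$ for all $a,b\in\QQ^\times$, the product formula yields $\prod_v\eta_v(B)=1$ over all places $v$.

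First I would treat the archimedean place. As $B$ is positive definite it is $\langle a_1,\dots,a_{2n-1}\rangle$ over $\RR$ with all $a_i>0$, so every $(a_i,a_j)_\infty=1$ and $(\det B,\pm\det B)_\infty=1$; thus $h_\infty(B)=1$ and the middle factor is trivial, leaving $\eta_\infty(B)=(-1,-1)_\infty^{n(n-1)/2}=(-1)^{n(n-1)/2}$. This is $+1$ for $n\equiv0,1$ and $-1$ for $n\equiv2,3\pmod4$, i.e. exactly the asserted right-hand side. Next, for an odd prime $p\nmid D_B$ we have $p\nmid 2D_B=\det(2B)$, so $2B$ is $p$-adically unimodular and all relevant symbols are symbols of $p$-adic units at an odd prime, giving $\eta_p(B)=1$. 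The bad primes of $2B$ are therefore $\{2\}\cup\{p\mid D_B\}$. If $2\mid D_B$, then $\{p\mid D_B\}$ already contains every bad prime, and the product formula together with the previous facts gives $\prod_{p\mid D_B}\eta_p(B)=\eta_\infty(B)$ immediately.

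The remaining, and only delicate, case is $2\nmid D_B$, where $2$ is a bad prime of $2B$ but is absent from the product, so I must separately establish $\eta_2(B)=1$. Here $\ord_2\det(2B)=1$, so the $\ZZ_2$-Jordan decomposition of the even lattice $2B$ is $L_0\perp\langle 2u\rangle$ with $u\in\ZZ_2^\times$ and $L_0$ even unimodular of rank $2n-2$; thus $L_0\cong U^{a}\perp V^{b}$ with $a+b=n-1$, where $U=\Left 0&1\\1&0\Right$ and $V=\Left 2&1\\1&2\Right$. Over $\QQ_2$ this is $2B\sim\langle1,-1\rangle^{a}\perp\langle2,6\rangle^{b}\perp\langle2u\rangle$, and I would then evaluate $h_2$, the middle symbol, and $(-1,-1)_2^{n(n-1)/2}$ directly. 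Using $a+b=n-1$ and the identity $s+\binom{s}{2}=\binom{s+1}{2}$, the dependence on $a,b,u$ collapses: the total exponent of $\epsilon:=(u,-1)_2$ is $2n$ and the total exponent of $-1$ is $2n(n-1)$, both even, so $\eta_2(B)=1$. Combined with the product formula this again gives $\prod_{p\mid D_B}\eta_p(B)=\eta_\infty(B)=(-1)^{n(n-1)/2}$, completing the proof.

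I expect this $2$-adic evaluation to be the main obstacle. It is the one point where the \emph{evenness} of $2B$ (rather than mere integrality) is genuinely used — to force the scale-$1$ Jordan component $L_0$ to be a sum of copies of $U$ and $V$ — and where the Kitaoka normalization $h_v=\prod_{i\le j}(a_i,a_j)_v$ of the Hasse invariant, with its diagonal symbols $(a_i,a_i)_v=(a_i,-1)_v$, must be handled carefully; indeed these diagonal terms are exactly what make $\eta_2$ (and $\eta_p$ at the odd ramified primes) nontrivial and the final sign come out right.
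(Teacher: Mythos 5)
Your proof is correct, and its global architecture is the same as the paper's: evaluate $\eta_\infty(B)=(-1)^{n(n-1)/2}$, check $\eta_p(B)=1$ for odd $p\nmid D_B$, check $\eta_2(B)=1$ when $D_B$ is odd, and conclude by the product formula (the paper's ``Hasse principle''). The genuine difference lies in the one step carrying all the content, namely the prime $2$. The paper handles it conceptually: by Remark \ref{rem:31}, $\eta_2(B)=-1$ would force the anisotropic kernel of $2B$ over $\QQ_2$ to be $3$-dimensional, and the classification of maximal $\ZZ_2$-integral lattices (note that $2B$ is automatically maximal at $2$ when $\ord_2\det(2B)=1$) then forces $D_B$ to be even. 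You instead compute $\eta_2$ directly from the Jordan splitting $U^a\perp V^b\perp\langle 2u\rangle$ with $a+b=n-1$. I verified that this computation does close: every symbol $(3,u)_2$ coming from the planes $V$ equals $(u,-1)_2$, because $(u,-3)_2=1$ for all units $u$ (as $-3\equiv 5\pmod 8$); after that substitution the total exponent of $(u,-1)_2$ is even, and the residual exponent of $-1$ is even once $a+b=n-1$ is substituted, giving $\eta_2(B)=1$ as you claim. The two routes thus trade a citation for a calculation: the paper's argument is shorter and explains the sign via the $2$-adic Witt index, but rests on the classification of maximal lattices; yours is self-contained, needing only the classification of even unimodular $\ZZ_2$-lattices and Hilbert-symbol arithmetic, though the collapse is delicate — the identity $(u,-3)_2=1$, which your sketch does not name, is exactly the point to make explicit when writing it out. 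Your preliminary scaling invariance $\eta_v(cB)=\eta_v(B)$ in odd rank is also a worthwhile addition: Section \ref{sec:3} defines $\eta_v$ for even integral matrices while the lemma applies it to a half-integral $B$, and this invariance is what makes that identification harmless (the paper leaves it implicit).
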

\begin{proof}
By the definition of $\eta_p(B)$, we have 
\beq
\eta_\infty(B)=\begin{cases}
1&\tx{if $n\equiv 0,1\pmod 4$. }\\
-1&\tx{if $n\equiv 2,3\pmod 4$. }
\end{cases}\label{tag:41}
\eeq
If $p$ and $2D_B$ are coprime, then we can easily see that $\eta_p(B)=1$. 
In view of Remark \ref{rem:31}, it follows from the classification of maximal $\ZZ_2$-integral lattice that $D_B$ is even if $\eta_2(B)=-1$.  
The statement therefore follows from the Hasse principle. 
\end{proof}

\begin{lemma}\label{lem:42}
For an arbitrary rational prime $p$, there exists a positive definite symmetric half-integral matrix $B$ of degree $2n-1$ with $D_B=p$. 
\end{lemma}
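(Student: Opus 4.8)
The plan is to recast the assertion as an existence statement for even integral matrices and then to produce one by the Hasse--Minkowski (local--global) method. Since $B$ is half-integral of odd degree $2n-1$, the matrix $G=2B$ is a positive definite symmetric even integral matrix and $\det G=2^{2n-1}\det B=2D_B$; hence $D_B=p$ is equivalent to finding a positive definite symmetric even integral $G$ of size $2n-1$ with $\det G=2p$ and then setting $B=\tfrac12 G$. The rank-one case $n=1$ is settled at once by $B=(p)$, so I would assume $n\geq 2$, whence the size $2n-1\geq 3$.

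First I would fix the ambient rational quadratic space $V$ over $\QQ$: positive definite of dimension $2n-1$, with $\det V$ in the square class of $2p$. As $2p$ is squarefree, this class has $q$-adic valuation $0$ for $q\nmid 2p$ and valuation $1$ at $q=2,p$, which is exactly the parity needed for a lattice of determinant $2p$ to exist at every completion. Such a $V$ is determined by its signature $(2n-1,0)$ (which fixes $\eta_\infty$), its determinant class, and its Hasse invariants $\eta_q$, subject only to the product formula $\prod_v\eta_v=1$ and to $\eta_q=1$ for $q\nmid 2p$. Since $\dim V\geq 3$, both values of the local Hasse invariant are realized among the spaces over $\QQ_q$ of dimension $2n-1$ and determinant class $2p$, and I would use this freedom at $p$ to force $\prod_v\eta_v=1$, thereby producing an admissible $V$.

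Next I would realize the determinant on the nose by prescribing local lattices: a unimodular even $\ZZ_q$-lattice in $V_q$ for every $q\nmid 2p$; a $\ZZ_p$-lattice with $v_p(\det)=1$ at the odd prime $p$ (compatible because $\det V_p$ has odd $p$-valuation); and at $q=2$ an even $\ZZ_2$-lattice with $v_2(\det)=1$, namely $\langle 2u\rangle\perp M$ with $M$ an even unimodular $\ZZ_2$-lattice of rank $2n-2$. These local lattices are unimodular for almost all $q$ and have $\prod_q\det L_q$ in the class of $2p$ with the correct valuations, so they define a nonempty genus; any lattice in that genus has Gram matrix $G$ with $\det G=2p$, whence $B=\tfrac12 G\in T^+_{2n-1}$ satisfies $D_B=p$. (Equivalently, one may arrange $V$ so that its maximal lattices already have these local determinants, in the spirit of Remark~\ref{rem:31}.)

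The step I expect to be delicate is the coordination at the two bad places $2$ and $p$: I must check that an even $\ZZ_2$-lattice of odd rank with $v_2(\det)=1$ sits inside the chosen $V_2$ and pins down $\eta_2$ correctly, which rests on the classification of maximal $\ZZ_2$-integral lattices already invoked in the proof of Lemma~\ref{lem:41}, and that the residual freedom at $p$ still permits $\prod_v\eta_v=1$ with $V$ positive definite. This is exactly where the primality of $p$ enters: the single extra ramified place supplies the parity of Hasse invariants that is absent in the determinant-$2$ case $D_B=1$, for which Lemma~\ref{lem:41} forbids existence when $n\equiv 2,3\pmod 4$ but imposes no obstruction once a genuine prime $p$ is present.
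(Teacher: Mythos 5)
Your proposal is correct in substance and follows the same broad local--global strategy as the paper: reduce the statement to producing a positive definite even integral lattice of rank $2n-1$ and determinant $2p$, prescribe compatible local lattices at every completion, and glue via the Minkowski--Hasse theorem together with the standard fact that a family of local lattices, almost all unimodular, inside a global quadratic space comes from a global lattice. Where you genuinely differ is in how the local data and the constraint $\prod_v\eta_v=1$ are managed. You argue abstractly and uniformly in $n$: invariant $1$ away from $2p$, whatever invariant the explicit even lattice $\langle 2u\rangle\perp M$ produces at $2$, and then you exploit the fact that in dimension $\geq 3$ both values of the local Hasse invariant occur with any prescribed determinant class, so the place $p$ can absorb the discrepancy. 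The paper instead makes this compensation explicit and splits into cases via (\ref{tag:41}): for $n\equiv 0,1\pmod 4$ it pads the rank-one lattice $M=\langle 2p\rangle$ with hyperbolic planes, while for $n\equiv 2,3\pmod 4$ it replaces $M$ by the rank-three lattice of trace-zero elements, with form $x\mapsto 2p\,x x^{\iota}$, of a suitable lattice in the definite quaternion algebra ramified exactly at $p$ and $\infty$; this quaternionic lattice is precisely the integral gadget whose Hasse invariant is $-1$ at $p$, matching $\eta_\infty=-1$. Your route buys uniformity and avoids quaternion algebras; the paper's buys an explicit integral model of the required local behaviour. One small repair to your write-up: as stated there is a circularity, since you fix $V$ (hence $\eta_2$) before choosing the lattice at $2$, yet $\eta_2$ is only pinned down once $\langle 2u\rangle\perp M$ is chosen. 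The fix is to reverse the order: choose the even $\ZZ_2$-lattice and the unimodular lattices at $q\nmid 2p$ first, let them dictate the invariants there, and only then use the dimension $\geq 3$ freedom at $p$ to select a lattice with $v_p(\det)=1$ whose Hasse invariant makes the global product equal to $1$ --- which is exactly the coordination you yourself flag as the delicate step.
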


\begin{proof}
It suffices to prove that there exists a positive definite even integral lattice $L$ with discriminant $2p$. 
We equip a lattice $M=\ZZ$ with the quadratic form $x\mapsto 2px^2$. This lattice is an even integral lattice with discriminant $2p$.   
Let $H\Pr$ be a definite quaternion algebra over $\QQ$ with discriminant $p$ and fix a maximal order $R\Pr$. 
Take a lattice $L\Pr$ such that $L\Pr\otimes_\ZZ\ZZ_p={\frkP\Pr}^{-1}$, where let $\frkP\Pr$ be the maximal ideal of $R\Pr\otimes_\ZZ\ZZ_p$, and $L\Pr\otimes_\ZZ\ZZ_q=R\Pr\otimes_\ZZ\ZZ_q$ for every rational prime $q$ distinct from $p$. 
We equip the lattice $M\Pr=\{x\in L\Pr\;|\;x\Iot=-x\}$ with the quadratic form $x\mapsto 2pxx\Iot$. Then this is an even integral lattice with discriminant $2p$. 

For each prime $q$, we define a lattice $L_q$ over $\ZZ_q$ as follows:
\[L_q=\begin{cases}
(M\otimes_\ZZ\ZZ_q)\oplus H_q^{n-1} &\tx{if $n\equiv 0,1\pmod 4$}\\
(M\Pr\otimes_\ZZ\ZZ_q)\oplus H_q^{n-2} &\tx{if $n\equiv 2,3\pmod 4$}
\end{cases}\]
(cf. (\ref{tag:41})). 
Here $H_q\simeq \ZZ_q^2$ is a hyperbolic plane over $\ZZ_q$. 

The Minkowski-Hasse theorem (see \cite[Theorem 4.1.2]{Ki}) shows that there exists a positive definite quadratic space $V$ over $\QQ$ such that $V\otimes_\QQ\QQ_q\simeq L_q\otimes_{\ZZ_q}\QQ_q$ for every rational prime $q$. There exists a lattice $L\subset V$ such that $L\otimes_\ZZ\ZZ_q\simeq L_q$ for every rational prime $q$.  
\end{proof}

Put $B_{a,\alp}=\Left B & B\alp \\ \trs \alp B & a\Right$ for $B\in T^+_{2n-1}$ and $(a,\alp)\in \calt_{2B}^+$. 

\begin{lemma}\label{lem:43}
Let $B\in T_{2n-1}^+$ and $m$ a positive integer.  
Assume that $D_B$ equals a rational prime $p$. 
Then the following conditions are equivalent: 
\begin{enumerate}
\renewcommand\labelenumi{(\theenumi)}
\item[(\roman{one})] there exists $(a,\alp)\in\calt^+_{2B}$ such that $D_{B_{a,\alp}}=m$;   
\item[(\roman{two})] $(-1)^nm\equiv0,1\pmod 4$ and
\[\biggl(\frac{(-1)^nm}{p}\biggl)\neq 
\begin{cases} 
-1 &\tx{if $n\equiv 0,1\pmod 4$. }\\ 
1 &\tx{if $n\equiv 2,3\pmod 4$. }
\end{cases}\]
\end{enumerate}
\end{lemma}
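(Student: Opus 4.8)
The plan is to reinterpret condition (\roman{one}) as the solvability of a single congruence governed by the finite quadratic form attached to $S=2B$ on the discriminant group $L^*/L$, and then to read off (\roman{two}) by splitting that form into its local components.

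First I would compute the determinant. Since $\det B_{a,\alp}=\det B\cdot(a-B[\alp])$ and $D_B=4^{n-1}\det B=p$, one gets $D_{B_{a,\alp}}=4^n\det B_{a,\alp}=4p(a-B[\alp])$. Hence $D_{B_{a,\alp}}=m$ forces $a-B[\alp]=m/(4p)$; conversely, given $\alp\in L^*$ with $B[\alp]\equiv -m/(4p)\pmod{\ZZ}$ the integer $a=B[\alp]+m/(4p)$ satisfies $a>B[\alp]=S[\alp]/2$, so $(a,\alp)\in\calt^+_{2B}$ and $D_{B_{a,\alp}}=m$. Because $S=2B$ is even and $\alp\in L^*=S^{-1}L$, the value $B[\alp]=S[\alp]/2$ modulo $\ZZ$ depends only on the class of $\alp$ in $L^*/L$; writing $q(\bar\alp)=B[\alp]\bmod\ZZ$ for the resulting finite quadratic form on $L^*/L$, I obtain the clean reformulation: condition (\roman{one}) holds if and only if $q$ represents $-m/(4p)$ in $\QQ/\ZZ$. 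This step uses only the freedom to choose $a$ and needs no Hasse--Minkowski argument.

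Next I would analyse $q$ prime by prime. The discriminant group $L^*/L$ has order $\det S=\det(2B)=2p$, so for odd $p$ the form $q$ decomposes as the orthogonal sum of its $p$-component and its $2$-component, and representing $-m/(4p)$ amounts to representing the two local parts. Since $L$ is maximal with $\ord_p\det S=1$, the Jordan splitting of $S$ over $\ZZ_p$ is a unimodular piece of rank $2n-2$ together with a single $p$-modular line $\La p\eps\Ra$, so the $p$-part of $q$ is $j\mapsto(\text{unit})j^2/p$, whose values are $0$ together with one square class mod $p$. Solvability of the $p$-congruence is therefore exactly a Legendre-symbol condition on $-m$, and identifying the relevant square class with the Hasse-type invariant $\eta_p(B)$ --- which by Lemma \ref{lem:41} equals $+1$ when $n\equiv0,1$ and $-1$ when $n\equiv2,3\pmod4$ --- turns this into the stated dichotomy $\Leg{(-1)^nm}{p}\neq-1$ resp.\ $\neq1$. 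Similarly, the $2$-component of $q$ is the discriminant form of a maximal even $\ZZ_2$-lattice of odd rank $2n-1$ with $\ord_2\det=1$, and a direct computation shows that it represents the $2$-part of $-m/(4p)$ precisely when $(-1)^nm\equiv0,1\pmod4$. Combining the two local conditions yields (\roman{two}), and running the equivalences backwards gives (\roman{two})$\Rightarrow$(\roman{one}).

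The main obstacle is the bookkeeping in these local computations: one must pin down the exact square class of the unit $\eps$ in the $p$-modular Jordan block and the $2$-adic oddity of $S$, keeping track of the stray factors of $2$ that arise from passing between $B$ and $S=2B$, and check that the accumulated signs assemble into $(-1)^n$ with the split according to $n\bmod4$ rather than some other normalization. Here Lemma \ref{lem:41} and Remark \ref{rem:31} do the essential work, translating the Witt-index and Hasse-invariant data of the maximal lattice into the sign we need. Finally I would treat the case $p=2$ (where $\det S=4$ and the $p$- and $2$-parts coincide) by the same discriminant-form computation carried out purely $2$-adically, checking that the Kronecker symbol $\Leg{(-1)^nm}{2}$ reproduces the required congruence.
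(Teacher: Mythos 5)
Your argument is correct, but it follows a genuinely different route from the paper. The paper's own proof is two lines: Lemma \ref{lem:41} pins down $\eta_p(B)$, and the equivalence of (\roman{one}) and (\roman{two}) is then quoted as a special case of the representability results of \cite[\S 5]{Y} for maximal lattices. You replace that citation by a self-contained computation: the identity $D_{B_{a,\alp}}=4p(a-B[\alp])$, together with the freedom to choose $a\in\ZZ$, reduces (\roman{one}) to whether the discriminant form $q(\bar\alp)=B[\alp]\bmod\ZZ$ on $L^*/L$ (a group of order $\det(2B)=2p$) represents $-m/(4p)$, and this is then settled prime by prime. The reduction is rigorous as written, and the two local computations you only sketch do close up: at $p$ the Jordan splitting $(\tx{unimodular})\perp\La p\eps\Ra$ turns the condition into $\Leg{-m\eps}{p}\neq-1$, and at $2$ the even unimodular Jordan block of rank $2n-2$ has determinant $\equiv(-1)^{n-1}\pmod 4$ (both a hyperbolic plane and the nonsplit even binary $\ZZ_2$-lattice have determinant $\equiv-1\pmod 4$), which yields exactly $(-1)^nm\equiv 0,1\pmod 4$. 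What your route buys is independence from the external reference \cite{Y}; what it costs is precisely the sign bookkeeping you flag, and there is one real pitfall in it: the identity $\eta_p(B)=\Leg{(-1)^{n-1}\eps}{p}$, which converts your Legendre condition into the stated dichotomy via Lemma \ref{lem:41}, is sensitive to the normalization of the Hasse invariant. With Kitaoka's convention (the product of Hilbert symbols over all pairs $i\le j$, which is what the paper uses via \cite{Ki}, and which is forced if Lemma \ref{lem:41} is to hold at all), the factor $(\det S,(-1)^{n-1}\det S)_p$ combines with the diagonal terms to give $\Leg{(-1)^{n-1}}{p}$ and everything matches; with the other common convention (product over $i<j$ only) the computation comes out off by $\Leg{-1}{p}$ and would already contradict Lemma \ref{lem:41} for $n=1$, $B=(p)$, $p\equiv 3\pmod 4$. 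So your proof is sound provided the write-up fixes Kitaoka's normalization throughout; the case $p=2$ then goes through by the same purely $2$-adic computation, as you indicate.
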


\begin{proof}
We have $\eta_p(B)=-1$ by virtue of Lemma \ref{lem:41}. 
Therefore Lemma \ref{lem:43} is a special case of the result in \cite[\S 5]{Y}. 
\end{proof}

The following corollary is a consequence of Lemma \ref{lem:42} and \ref{lem:43}. 
\begin{corollary}\label{cor:41}
We have
\[\{D_h\;|\;h\in T^+_{2n}\}=\begin{cases}
\frkD_n &\tx{if $n\equiv 0,1,3\pmod 4$. }\\
\frkD_n-\{1\} &\tx{if $n\equiv 2\pmod 4$. }
\end{cases}\]
\end{corollary}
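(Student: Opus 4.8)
The goal is to determine the set $\{D_h \mid h \in T^+_{2n}\}$ of discriminants realized by positive definite half-integral matrices of size $2n$. Since $D_h \in \frkD_n$ always holds by the definition of $\frkD_n$ (the condition $(-1)^n D_h \equiv 0,1 \pmod 4$ is automatic for discriminants), the containment $\{D_h\} \subseteq \frkD_n$ is immediate, and the content is to show that every element of $\frkD_n$ is realized, except possibly $1$ when $n \equiv 2 \pmod 4$.

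\medskip

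The plan is to reduce the realization of an arbitrary $m \in \frkD_n$ to a bordering construction over a single well-chosen matrix $B$ of size $2n-1$, using the two preceding lemmas. First I would fix $m \in \frkD_n$ and seek $h \in T^+_{2n}$ of the special form $h = B_{a,\alp} = \Left B & B\alp \\ \trs\alp B & a\Right$ for suitable $B \in T^+_{2n-1}$ and $(a,\alp) \in \calt^+_{2B}$. By Lemma \ref{lem:43}, once $D_B$ is a prime $p$, the realizability of $m$ as $D_{B_{a,\alp}}$ is governed solely by the quadratic residue symbol $\Leg{(-1)^nm}{p}$, with the excluded value depending on the residue class of $n$ modulo $4$. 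Lemma \ref{lem:42} guarantees that a matrix $B \in T^+_{2n-1}$ with $D_B = p$ exists for every prime $p$, so I am free to choose the prime $p$ after seeing $m$.

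\medskip

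The crux is therefore a choice-of-prime argument. In the case $n \equiv 0,1 \pmod 4$, Lemma \ref{lem:43} excludes only $\Leg{(-1)^nm}{p} = -1$, so it suffices to produce a prime $p$ with $\Leg{(-1)^nm}{p} \in \{0,1\}$; taking $p$ to divide $m$, or any prime for which $(-1)^nm$ is a square modulo $p$, settles every $m$ including $m=1$. In the case $n \equiv 3 \pmod 4$, Lemma \ref{lem:43} excludes $\Leg{(-1)^nm}{p} = 1$, so I must instead find a prime $p$ with $\Leg{(-1)^nm}{p} \in \{0,-1\}$; for $m$ not a perfect square this follows from Dirichlet/Chebotarev (there are infinitely many primes at which a given non-square is a non-residue), and for $m$ a perfect square one chooses $p \mid m$ so the symbol vanishes, again realizing all $m$ including $m=1$. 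The genuinely exceptional case is $n \equiv 2 \pmod 4$ with $m = 1$: here $(-1)^n m = 1$ is a square at every prime, so $\Leg{(-1)^nm}{p} = 1$ for all $p \nmid m$ and the excluded value $1$ is unavoidable, forcing $D_h = 1$ to be unattainable; this matches the classical fact, already invoked in the proof of Lemma \ref{lem:11}, that no $B \in T^+_{2n}$ has $D_B = 1$.

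\medskip

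The main obstacle I anticipate is the $n \equiv 2,3 \pmod 4$ analysis, where one needs a non-residue rather than a residue: ensuring a prime $p$ with the symbol equal to $-1$ requires an existence-of-primes input and a careful separation of the square and non-square cases for $m$, whereas the residue side ($n \equiv 0,1$) is elementary. The remaining bookkeeping — checking that the bordering in Lemma \ref{lem:43} indeed produces a genuine element of $T^+_{2n}$ and that no elements outside $\frkD_n$ can arise — is routine given the stated lemmas.
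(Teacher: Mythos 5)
Your proposal is correct and is essentially the paper's own proof: the paper disposes of the corollary in one line ("a consequence of Lemma \ref{lem:42} and \ref{lem:43}"), and your choice-of-prime argument—symbol $\neq -1$ handled by residues/divisors for $n\equiv 0,1\pmod 4$, symbol $\neq 1$ handled by non-residues or $p\mid m$ for $n\equiv 2,3\pmod 4$, with the exclusion of $D_h=1$ for $n\equiv 2\pmod 4$ resting on the classical unimodular even lattice fact already invoked in the proof of Lemma \ref{lem:11}—is exactly the intended way of combining those two lemmas. One cosmetic slip: for $n\equiv 3\pmod 4$ the value $m=1$ does not lie in $\frkD_n$ (odd $n$ forces $m\equiv 0,3\pmod 4$, and Lemma \ref{lem:43}(ii) indeed fails for $m=1$), so your claim of "realizing all $m$ including $m=1$" there is vacuous, but this does not affect the argument.
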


We define the space $S^{\new,+,+}_{k+1/2}(4p)$ by  
\[S^{\new,+,+}_{k+1/2}(4p)=\{g\in S^{\new,+}_{k+1/2}(4p)\;|\; g|U_k(p^2)=p^{k-1}g\}. \] 

\begin{proposition}\label{prop:41}
Assume that $n\equiv 2,3\pmod 4$. 
Let $p$ be a rational prime and $B$ a positive definite symmetric half-integral matrix of degree $2n-1$ with $D_B=p$. 
Let $g\in S^+_{k+1/2}(4)$ and $h\in S^{\new,+,+}_{k+1/2}(4p)$. 
Put 
\[\vPh^B_{g,h}(\tau,w)=\sum_{(a,\alp)\in\calt_{2B}^+}c_{g,h}(a,\alp)q^a\bfe(2B(\alp,w)), \]
where 
\begin{multline*}
c_{g,h}(a,\alp)=c_g(D(B_{a,\alp}))-p^kc_g(p^{-2}D(B_{a,\alp}))+2^{-\bfb_p(D(B_{a,\alp}))}c_h(D(B_{a,\alp}))\\
\times p^{\frkf_p(D(B_{a,\alp}))/2}
(\lam_{p, D(B_{a,\alp})}(p^{-1/2})-p^{1/2}\lam_{p,p^{-2}D(B_{a,\alp})}(p^{-1/2})).  
\end{multline*}
Here, we set $D(B_{a,\alp})=D_{B_{a,\alp}}$. 
Then the linear map $(g,h)\mapsto\vPh^B_{g,h}$ maps $S^+_{k+1/2}(4)\oplus S^{\new,+,+}_{k+1/2}(4p)$ onto $J^{\cusp,M}_{k+n,2B}$. 
\end{proposition}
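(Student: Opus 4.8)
The plan is to deduce the surjectivity from Theorem \ref{thm:31}. First I would determine the invariants of $S=2B$. Since $D_B=p$ forces $\det(2B)=2p$, which is square-free, no prime can satisfy $s_q(2B)=2$, so $\frkS_2=\emptyset$ and $d_{2B}=1$; a local analysis of the maximal even lattice attached to $2B$ (of the kind built in Lemma \ref{lem:42}) shows that $p$ is the only prime with $s_p(2B)=1$, whence $b_{2B}=p$, and Lemma \ref{lem:41} gives $\eta_p(2B)=-1$. Consequently the spanning forms of Theorem \ref{thm:31} are indexed by $bd\in\{1,p\}$, and for $b=p$ the hypothesis there becomes $\alp_p=-\eta_p(2B)p^{-1/2}=p^{-1/2}$.

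Next I would match these spanning forms with half-integral weight data. By Theorem \ref{thm:21}(1) one has $S^+_{k+1/2}(4)=S^{\new,+}_{k+1/2}(4)$, and under the Shimura correspondence of Theorem \ref{thm:21}(2) the $b=d=1$ family corresponds to a Hecke eigenbasis of $S^+_{k+1/2}(4)$. For the $b=p$, $d=1$ family the relation $\ome_p=\alp_pp^{k-1/2}=p^{k-1}$ holds, so by Proposition \ref{prop:21} with $\eps=1$ the correspondents lie exactly in $S^{\new,+,+}_{k+1/2}(4p)$. Thus $J^{\cusp,M}_{k+n,2B}$ is spanned by forms matching the summands $S^+_{k+1/2}(4)$ and $S^{\new,+,+}_{k+1/2}(4p)$ of the source.

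I would then establish well-definedness and surjectivity. The coefficient $c_{g,h}(a,\alp)$ visibly depends only on $D(B_{a,\alp})$, so $\vPh^B_{g,h}$ has the Maass shape; to see it is a genuine Jacobi cusp form I would, using (\ref{tag:11}) (Kohnen's integers $\phi(d;h)$) and the relation $D(B_{a,\alp})=2\pal_{a,\alp}$, rewrite its coefficients as a combination of the spanning forms of Theorem \ref{thm:31}. Here the $g$-part $c_g(D(B_{a,\alp}))-p^kc_g(p^{-2}D(B_{a,\alp}))$ feeds the $b=d=1$ forms, identified through Proposition \ref{prop:31} with Fourier--Jacobi coefficients of Ikeda lifts, while the $h$-part, whose bracket $\lam_{p,D(B_{a,\alp})}(p^{-1/2})-p^{1/2}\lam_{p,p^{-2}D(B_{a,\alp})}(p^{-1/2})$ equals $l_{p,2B,D(B_{a,\alp})}(p^{-1/2})$, feeds the $b=p$ forms. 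For surjectivity the crucial input is Lemma \ref{lem:43}: the indices $D(B_{a,\alp})$ range over exactly $\{m\;|\;\Psi_p((-1)^nm)\neq1\}$, and since $(-1)^k=(-1)^n$, Proposition \ref{prop:21} forces every $h\in S^{\new,+,+}_{k+1/2}(4p)$ to be supported on precisely these classes. Hence the $h$-parameter realizes the full $b=p$ component and the $g$-parameter the $b=d=1$ component, so the map hits the whole spanning set.

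The hard part is the coefficient bookkeeping underlying the previous paragraph: converting the raw combination $c_g(D(B_{a,\alp}))-p^kc_g(p^{-2}D(B_{a,\alp}))$ and the $h$-bracket into the assembled coefficients $c_g(\frkd_{\pal_{a,\alp}})\frkf_{\pal_{a,\alp}}^{k-1/2}\prod_ql_{q,2B,\pal_{a,\alp}}(\alp_q)$ of Theorem \ref{thm:31}. This forces one to trade the $\phi(d;h)$-sum of (\ref{tag:11}) for products of the local polynomials $\lam_{q,\cdot}$, to track the factor-of-two shift $D(B_{a,\alp})=2\pal_{a,\alp}$ together with the normalization $2^{-\bfb_p(D(B_{a,\alp}))}$, and to pin down the value $l_{p,2B,m}(p^{-1/2})$ so that the $h$-contribution reproduces the $p\in\frkS_1$ local factor. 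It is the interplay of this local computation with the $\Psi_p$-support condition of Lemma \ref{lem:43} and Proposition \ref{prop:21} that makes the map onto.
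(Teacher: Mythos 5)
Your skeleton is the same as the paper's: use Lemma \ref{lem:41} to get $\eta_p(B)=-1$, hence $l_{q,2B,N}=\lam_{q,N}$ for $q\neq p$ and $l_{p,2B,N}=\lam_{p,N}-p^{1/2}\lam_{p,p^{-2}N}$ (all $q\neq p$ lie in $\frkS_0$, $p\in\frkS_1$, $\frkS_2=\emptyset$); invoke Theorem \ref{thm:31} so that $J^{\cusp,M}_{k+n,2B}$ is spanned by the eigenforms attached to $b=d=1$ and to $b=p$, $d=1$ with $\alp_p=-\eta_p(2B)p^{-1/2}=p^{-1/2}$; match these two families, via Theorem \ref{thm:21} and Proposition \ref{prop:21}, with Hecke eigenbases of $S^+_{k+1/2}(4)$ and of $S^{\new,+,+}_{k+1/2}(4p)$; and check that on eigenforms $(g,h)\mapsto\vPh^B_{g,h}$ reproduces exactly those spanning forms. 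This is precisely the route of the paper.

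The gap is the step you defer as ``the hard part,'' and the tool you name for it is the wrong one. Formula (\ref{tag:11}) relates the Ikeda local factors $\wtl{F}_{p,h}$ to Kohnen's integers $\phi(d;h)$; it contains no Fourier coefficients of half-integral weight forms (and $\phi(d;h)$ occurs nowhere in Proposition \ref{prop:41}), so it cannot convert $c_g(D)-p^kc_g(p^{-2}D)$, or the $h$-term, into the coefficients of Theorem \ref{thm:31}. The identities that do this, and which constitute essentially the whole of the paper's proof, are the multiplicativity formulas (\ref{tag:42}) and (\ref{tag:43}) for plus-space eigenforms, deduced from Theorem \ref{thm:21}: for an eigenform $g\in S^+_{k+1/2}(4)$ one has $c_g(m)=c_g(\frkd_m)\frkf_m^{k-1/2}\prod_q\lam_{q,m}(\alp_q)$, whence, using $\frkd_{p^{-2}D}=\frkd_D$ and $\frkf_{p^{-2}D}=p^{-1}\frkf_D$,
\[c_g(D)-p^kc_g(p^{-2}D)=c_g(\frkd_D)\frkf_D^{k-1/2}\prod_q l_{q,2B,D}(\alp_q),\]
which is the $b=d=1$ spanning coefficient; and for an eigenform $h\in S^{\new,+,+}_{k+1/2}(4p)$ one has $c_h(m)=c_h(\frkd_m)\frkf_m^{k-1/2}p^{-\frkf_p(m)/2}\prod_{q\neq p}\lam_{q,m}(\bet_q)$ --- it is this $p^{-\frkf_p(m)/2}$, forced by $h|U_k(p^2)=p^{k-1}h$, that the factor $p^{\frkf_p(D)/2}$ in the statement exists to cancel, after which the $h$-term becomes the $b=p$ coefficient $2^{-\bfb_p(D)}c_h(\frkd_D)\frkf_D^{k-1/2}\prod_{q\neq p}\lam_{q,D}(\bet_q)\,l_{p,2B,D}(p^{-1/2})$. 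Your proposal neither states these formulas nor offers a mechanism producing them, so the decisive computation is missing. A smaller point: the ``factor-of-two shift'' $D(B_{a,\alp})=2\pal_{a,\alp}$ you propose to track is an artifact of reading the displayed definition of $\pal_{a,\alp}$ literally; for the Definition of $J^{\cusp,M}_{\kap,S}$ to make sense (its values must lie in $\frkD_k$) and for the coefficient identification to close (note $\frkd_{D/2}\neq\frkd_D$ in general), $\pal_{a,\alp}$ must be read as $(2a-S[\alp])\det S=D(B_{a,\alp})$; a proof that genuinely carried a factor $2$ into Theorem \ref{thm:31} would fail to match coefficients.
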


\begin{remark}\label{rem:51}
We can see that $J^{\cusp,M}_{k+n,2B}=J^\cusp_{k+n,2B}$ in this case. 
It follows from the proof of Theorem \ref{thm:31} that the map above is an isomorphism. 
These facts are not needed in the sequel. 
\end{remark}

\begin{proof}
Lemma \ref{lem:41} shows that $\eta_p(B)=-1$ and hence the polynomial $l_{q,2B,N}$ specifies as follows:
\[l_{q,2B,N}=\begin{cases}
\lam_{q,N} &\tx{if $q\neq p$. }\\
\lam_{p,N}-p^{1/2}\lam_{p,p^{-2}N} &\tx{if $q=p$. }
\end{cases}\]
If $g$ and $h$ are Hecke eigenforms, and $\{\alp_q,\alp_q^{-1}\}$ and $\{\bet_q,\bet_q^{-1}\}_{q\neq p}$ are the Satake parameters of elliptic cusp forms corresponding to $g$ and $h$ respectively, then Theorem \ref{thm:21} shows that 
\begin{gather}
c_g(m)=c_g(\frkd_m)\frkf_m^{k-1/2}\prod_q\lam_{q,m}(\alp_q), \label{tag:42}\\
c_h(m)=c_h(\frkd_m)\frkf_m^{k-1/2}p^{-\frkf_p(m)/2}\prod_{q\neq p}\lam_{q,m}(\bet_q). \label{tag:43}
\end{gather}
We thus see that $\vPh^B_{g,h}$ is a sum of Jacobi forms associated with $g$ and $h$ in Theorem \ref{thm:31}. 
Our statement is now immediate.  
\end{proof}

\begin{lemma}\label{lem:44}
Let $p$ and $B$ be the ones in Proposition \ref{prop:41} and Lemma \ref{lem:44}. If $k\equiv n\pmod 2$ and $F\in S_{k+n}(\Sp_{2n}(\ZZ))$ admits a Fourier expansion of the form (\ref{tag:13}), then 
\[c_F(B_{a,\alp})=c(D_{B_{a,\alp}})-p^kc(p^{-2}D_{B_{a,\alp}}). \]
\end{lemma}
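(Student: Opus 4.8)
The plan is to evaluate the sum (\ref{tag:13}) at $h=B_{a,\alp}$ and show that it collapses to two terms. Write $h=B_{a,\alp}$ and $N=D_h=D_{B_{a,\alp}}$. The integers $d^{-2}N$ with $d\mid\frkf_h$ are pairwise distinct, so the asserted formula follows once I prove the $c$-independent statement
\[\phi(1;h)=1,\qquad \phi(p;h)=-p,\qquad \phi(d;h)=0\quad(d\mid\frkf_h,\ d\neq1,p);\]
substituting these into (\ref{tag:13}) gives $c(N)+p^{k-1}(-p)c(p^{-2}N)=c(N)-p^kc(p^{-2}N)$, where $c(p^{-2}N)$ is read as $0$ when $p^{-2}N\notin\frkD_k$. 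This convention is consistent: $p\nmid\frkf_h$ forces $\ord_pN=1$, hence $p^{-2}N\notin\ZZ$ and the $d=p$ term is absent, while $p\mid\frkf_h$ forces $\ord_pN\geq2$ and $p^{-2}N\in\frkD_k$.

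To reach these values of $\phi$ I would use the key identity (\ref{tag:11}). After dividing both sides by $\frkf_h^{k-1/2}$ it reads
\[\prod_q\wtl{F}_{q,h}(X_q)=\sum_{d\mid\frkf_h}d^{-1/2}\phi(d;h)\prod_q\lam_{q,d^{-2}N}(X_q).\]
For each prime $q$ the Laurent polynomials $\lam_{q,q^{-2j}N}$ ($0\le j\le\frkf_q(N)$) have pairwise distinct degrees, so the products $\prod_q\lam_{q,d^{-2}N}$ are linearly independent and the coefficients $d^{-1/2}\phi(d;h)$ are uniquely determined by the left-hand side. Hence the desired values of $\phi$ are \emph{equivalent} to the single polynomial identity
\[\prod_q\wtl{F}_{q,h}(X_q)=\prod_q\lam_{q,N}(X_q)-p^{1/2}\prod_q\lam_{q,p^{-2}N}(X_q),\]
whose right-hand side has, in the above basis, coefficient $1$ at $d=1$, coefficient $-p^{1/2}$ at $d=p$, and nothing else.

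I would prove this identity by computing the local Siegel series of $h=B_{a,\alp}$ prime by prime, using $\frkS_1=\{p\}$ and $\frkS_2=\varnothing$ (which hold here because $D_B=p$ and the lattice is maximal). For every $q\neq p$ one has $s_q(2B)=0$, i.e. $q\in\frkS_0$, and the Siegel-series computation behind Theorem \ref{thm:31} gives $\wtl{F}_{q,h}=\lam_{q,N}$; for odd $q$ this is transparent, since $B$ is then $q$-unimodular and splits off as an orthogonal summand, reducing the rank-$2n$ series to the rank-one (elliptic) one. For $q=p$ the form $B$ is $p$-maximal with $s_p(2B)=1$ and $\eta_p(B)=-1$ by Lemma \ref{lem:41}, and the corresponding computation yields $\wtl{F}_{p,h}=\lam_{p,N}-p^{1/2}\lam_{p,p^{-2}N}=l_{p,2B,N}$. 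Since $N$ and $p^{-2}N$ differ by the square $p^2$, which is prime to $q$ for every $q\neq p$, one has $\lam_{q,N}=\lam_{q,p^{-2}N}$ there (both sides depend only on $\frkf_q$ and $\Psi_q$); factoring $\prod_{q\neq p}\lam_{q,N}$ out of $\prod_q\wtl{F}_{q,h}$ then produces exactly the displayed identity.

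The main obstacle is the pair of local Siegel-series evaluations above, and above all the $\frkS_1$-prime case $\wtl{F}_{p,h}=\lam_{p,N}-p^{1/2}\lam_{p,p^{-2}N}$. This is precisely the computation that makes Theorem \ref{thm:31} and Proposition \ref{prop:31} work—there Ikeda's Siegel-series formula for $c_F$ is matched against the Fourier--Jacobi expansion built from the factors $l_{q,2B,\cdot}$—so I would either quote it from \cite{Y} or reproduce the two cases directly. The remaining work is bookkeeping of normalizations: reconciling the argument $(a-S[\alp]/2)\det S$ used for the index-$2B$ Jacobi coefficients in \S\ref{sec:3} with the discriminant $D_h$ appearing in Kohnen's relation (\ref{tag:13}), and checking the invariance $\lam_{q,N}=\lam_{q,p^{-2}N}$ for $q\neq p$ at the level of $\frkf_q$ and $\Psi_q$.
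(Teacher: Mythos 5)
Your proof is correct, and its essential technical input --- the local evaluations $\wtl{F}_{q,B_{a,\alp}}=\lam_{q,N}$ for $q\neq p$ and $\wtl{F}_{p,B_{a,\alp}}=\lam_{p,N}-p^{1/2}\lam_{p,p^{-2}N}$ --- is exactly the computation from \cite{Y} that the paper invokes through Proposition \ref{prop:31}. What differs is the scaffolding around it. The paper's one-line proof cites Proposition \ref{prop:31} and (\ref{tag:42}); taken literally, that chain establishes $c_F(B_{a,\alp})=c(N)-p^kc(p^{-2}N)$ only when the parameter $c$ is the coefficient function $c_g$ of a Hecke eigenform (or a linear combination of such), since both cited facts are statements about eigenforms and their Ikeda lifts; the passage to an \emph{arbitrary} function $c:\frkD_k\to\CC$ is delegated to the further citation \cite[\S 12]{Y}. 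Your argument closes that gap internally: from (\ref{tag:11}) and the linear independence of the products $\prod_q\lam_{q,d^{-2}N}$ (pairwise distinct top degrees in each $X_q$) you pin down the universal values $\phi(1;h)=1$, $\phi(p;h)=-p$, $\phi(d;h)=0$ otherwise, after which the lemma is pure substitution in (\ref{tag:13}), valid for every parameter $c$ regardless of whether it comes from a modular form. This is the logically complete form of the argument and is surely what the paper's citations are meant to compress; what the paper's phrasing buys is brevity, what yours buys is that the reduction from the stated lemma to the quoted Siegel-series identity is airtight.

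Two small corrections. The consistency remark ``$p\nmid\frkf_h$ forces $\ord_pN=1$'' is not right: Lemma \ref{lem:43} allows $\bigl(\frac{(-1)^nN}{p}\bigr)=-1$, in which case $\ord_pN=0$, and for $p=2$ one can have $\ord_2N\in\{2,3\}$ with $2\nmid\frkf_h$. What is true, and all you need, is that $p\nmid\frkf_h$ implies $p^{-2}N\notin\frkD_k$ (for odd $p$ because then $\ord_pN\le 1$; for $p=2$ because $N/4=(\frkd_N/4)\frkf_N^2$ then violates the congruence condition), so your convention $c(p^{-2}N)=0$ remains consistent. Second, the factor-of-$2$ issue you flag is real: with the paper's stated definitions one computes $D_{B_{a,\alp}}=2(a-S[\alp]/2)\det S$, so the argument $\pal_{a,\alp}$ of \S\ref{sec:3} and the discriminant $D(B_{a,\alp})$ of \S\ref{sec:4} differ by $2$; this is a normalization slip internal to the paper, and your choice to run the whole argument with $N=D_{B_{a,\alp}}$, as Proposition \ref{prop:41} does, is the correct reading.
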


\begin{proof}
Our assertion is a consequence of Proposition \ref{prop:31} and (\ref{tag:42}). 
This is also a special case of the result in \cite[\S 12]{Y}
\end{proof}

\begin{lemma}\label{lem:45}
Under the notation as in Lemma \ref{lem:44}, there exists a cusp form $g_B\in S_{k+1/2}^+(4)$ such that $F_B=\vPh^B_{g_B,0}$. 
\end{lemma}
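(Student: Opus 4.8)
The plan is to feed $F_B$ into Proposition \ref{prop:41} and then to kill the resulting newform component by exploiting the growth of the Fourier coefficients of the Siegel cusp form $F$. First I would record that $F_B\in J^{\cusp,M}_{k+n,2B}$: by Lemma \ref{lem:44} the coefficient $c_F(B_{a,\alp})=c(D_{B_{a,\alp}})-p^kc(p^{-2}D_{B_{a,\alp}})$ depends on $(a,\alp)$ only through $D_{B_{a,\alp}}$, hence only through $\pal_{a,\alp}=(a-B[\alp])\det(2B)$, which is exactly the defining condition of $J^{\cusp,M}_{k+n,2B}$. Proposition \ref{prop:41} then supplies $g\in S^+_{k+1/2}(4)$ and $h\in S^{\new,+,+}_{k+1/2}(4p)$ with $F_B=\vPh^B_{g,h}$, so it suffices to prove $h=0$; then $g_B=g$ does the job.

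Next I would isolate the contribution of $h$ along a fixed $p$-power tower. Fix a $p$-free $D_0\in\frkD_k$ and put $m=p^{2j}D_0$. Since $h|U_k(p^2)=p^{k-1}h$ we have $c_h(m)=p^{(k-1)j}c_h(D_0)$, and a direct evaluation of $\lam_{p,m}$ and $\lam_{p,p^{-2}m}$ at $X=p^{-1/2}$ (using the closed form of $l_e(p^{-1/2})$) collapses the factor $p^{\frkf_p(m)/2}\bigl(\lam_{p,m}(p^{-1/2})-p^{1/2}\lam_{p,p^{-2}m}(p^{-1/2})\bigr)$, so that the entire $h$-term of $c_{g,h}(a,\alp)$ equals $c_h(D_0)\,p^{(k-1)j}$ for $j\geq1$. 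Comparing coefficients in $F_B=\vPh^B_{g,h}$ and telescoping the identity $c(m)-p^kc(p^{-2}m)=\bigl(c_g(m)-p^kc_g(p^{-2}m)\bigr)+(\text{$h$-term})$ up the tower (the terms below $D_0$ drop out once $p^{-2}D_0\notin\frkD_k$) yields
\[c(p^{2J}D_0)=c_g(p^{2J}D_0)+\kap_p\,c_h(D_0)\,p^{kJ}+O\bigl(p^{(k-1/2)J}\bigr),\qquad \kap_p\neq0,\]
where the constant $\kap_p$ depends only on $p$.

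Finally I would play this off against growth. By Proposition \ref{prop:21} the value $c_h(D_0)$ can be non-zero only when $\Psi_p((-1)^nD_0)\neq1$; for $n\equiv2\pmod4$ this forces $D_0$ to be a non-square, so that $c$ is uniquely pinned down on the relevant indices (Remark \ref{rem:11}), whereas for $n\equiv3\pmod4$ the parameter $c$ is unambiguous on all of $\frkD_k$. In either case, reading (\ref{tag:13}) as a recursion for $c$ and inserting a cusp-form estimate for the Fourier coefficients of $F$ that improves the Hecke exponent by more than $n/2$ (the Kitaoka-type bound $c_F(T)\ll_\eps(\det T)^{(k+n)/2-(2n+1)/4+\eps}$ of \cite{Ki}), together with the polynomial bound $\phi(d;h)\ll_\eps d^{1/2+\eps}$, I would obtain $c(m)\ll_\eps m^{k/2-1/4+\eps}$ on these indices (the recursion keeps every intermediate index in the same square class, so it stays within the range controlled above). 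Hence $c(p^{2J}D_0)=o(p^{kJ})$, and since also $c_g(p^{2J}D_0)\ll_\eps p^{(k-1/2)J+\eps}$, the displayed identity forces $\kap_p\,c_h(D_0)=0$, i.e. $c_h(D_0)=0$. As $D_0$ was arbitrary, $c_h$ vanishes identically, so $h=0$ and $F_B=\vPh^B_{g,0}$.

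The hardest step is this last growth input: controlling the generally non-modular parameter $c$ sharply enough to beat $p^{kJ}$. This requires a cuspidality improvement over the Hecke bound for $F$ whose gain in the $\det$-exponent strictly exceeds $n/2$, and it requires the estimate for Kohnen's $\phi(d;h)$; by contrast, the local evaluation of the $h$-term and the telescoping are routine once the closed form of $l_e(p^{-1/2})$ is in hand.
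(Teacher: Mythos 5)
Your first two steps are sound and essentially coincide with the paper's own argument: Lemma \ref{lem:44} shows that the coefficients of $F_B$ depend only on $D_{B_{a,\alp}}$, so $F_B\in J^{\cusp,M}_{k+n,2B}$ and Proposition \ref{prop:41} yields $(g,h)$ with $F_B=\vPh^B_{g,h}$; and your local evaluation of the $h$-term along the $p$-power tower (using $h|U_k(p^2)=p^{k-1}h$, Proposition \ref{prop:21}, and the closed form of $l_e(p^{-1/2})$) is correct. After telescoping it reproduces the paper's identity $c(mp^{2f})=c_{g}(mp^{2f})+2^{-\bfb_p(m)}c_{h}(m)p^{(k-1/2)f}\lam_{p,mp^{2f}}(p^{-1/2})$, whose $h$-part grows like a nonzero constant times $p^{kf}$ when $c_h(m)\neq 0$.

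The gap is in your final step, the one you yourself flag as hardest. To beat $p^{kJ}$ you invoke the estimate $c_F(T)\ll_\eps(\det T)^{(k+n)/2-(2n+1)/4+\eps}$ and attribute it to \cite{Ki}; but \cite{Ki} is Kitaoka's book on quadratic forms (cited in the paper only for Hasse invariants and the Minkowski--Hasse theorem) and contains no such estimate. More importantly, a gain in the $\det$-exponent strictly exceeding $n/2$ over the Hecke bound is exactly of Resnikoff--Salda\~na strength (the conjectural gain being $(2n+1)/4=n/2+1/4$), and this is an open conjecture in every genus $\geq 2$; the known unconditional improvements (Kitaoka in genus $2$, B\"ocherer--Kohnen in general genus) have gains far smaller than $n/2$, so your recursion only yields a bound of the shape $c(m)\ll m^{(k+n)/2-\lam+\eps}$ with exponent still larger than $k/2$, which does not contradict the $p^{kJ}$ term. (The difficulty is intrinsic: Ikeda lifts themselves have coefficients of size $D_h^{k/2-1/4}$, so any bound of the strength you need is essentially sharp.) Your auxiliary bound $\phi(d;h)\ll_\eps d^{1/2+\eps}$ is likewise asserted without proof. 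The paper circumvents all of this by a purely modular device requiring no estimates for Siegel cusp forms: since $\Psi_p((-1)^km)\neq 1$, the integer $(-1)^km$ is not a square, so one can pick a second prime $q\neq p$ with $\Leg{(-1)^km}{q}=-1$ and a matrix $B\Pr$ with $D_{B\Pr}=q$ (Lemma \ref{lem:42}); running Lemma \ref{lem:43}, Lemma \ref{lem:44} and Proposition \ref{prop:41} for $B\Pr$ expresses $c(mp^{2f})$ through coefficients of $g_{B\Pr}$ and $h_{B\Pr}$, which by (\ref{tag:42}), (\ref{tag:43}) and Deligne's bound for the associated elliptic eigenforms are $O(p^{(k-1/2+\eps)f})$. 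Comparing this with the $p^{kf}$ growth from your tower identity forces $c_h(m)=0$ for all $m$, i.e. $h=0$. If you replace your growth input by this second-prime comparison, the rest of your proof goes through.
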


\begin{proof}
Proposition \ref{prop:41} yields $g_B\in S^+_{k+1/2}(4)$ and $h_B\in S^{\new,+,+}_{k+1/2}(4p)$ such that $F_B=\vPh^B_{g_B,h_B}$. 

Seeking a contradiction, we assume that $h_B\neq 0$. 
Take $m\in \frkD_k$ such that $c_{h_B}(m)\neq 0$. 
By virtue of Proposition \ref{prop:21}, we see that $\Psi_p((-1)^km)\neq 1$ and can assume that $\frkf_p(m)=0$. 

For each non-negative integer $f$, Lemma \ref{lem:43} gives a pair $(a,\alp)\in\calt_{2B}^+$ such that $D_{B_{a,\alp}}=mp^{2f}$, and from (\ref{tag:43}) 
\[c_{h_B}(D(B_{a,\alp}))p^{\frkf_p(D(B_{a,\alp}))/2}=c_{h_B}(m)p^{(k-1/2)f}. \]
Thus Lemma \ref{lem:44} shows that 
\[c(mp^{2f})=c_{g_B}(mp^{2f})+2^{-\bfb_p(m)}c_{h_B}(m)p^{(k-1/2)f}\lam_{p,mp^{2f}}(p^{-1/2}). \]
Since the Ramanujan-Petersson conjecture suggests that $\alp_q$ and $\bet_q$ are of modulus $1$, (\ref{tag:42}) and (\ref{tag:43}) show that  
\begin{align*}
c_{g_B}(mq^{2f})&=O(q^{(k-1/2+\eps)f}), &
c_{h_B}(mq^{2f})&=O(q^{(k-1/2+\eps)f})
\end{align*}
for each prime $q$ and every $\eps>0$. Hence $c(mq^{2f})=O(q^{(k-1/2+\eps)f})$ 
for all rational primes $q$ distinct from $p$. 
We can observe that this is a contradiction, replacing $p$ with another rational prime. 
\end{proof}

\begin{lemma}\label{lem:46}
Let $q$ be a rational prime and $B\Pr$ a positive definite symmetric half-integral matrix of degree $2n-1$ with $D_{B\Pr}=q$. Under the notation as in Lemma \ref{lem:45}, we then have $g_B=g_{B\Pr}$.  
\end{lemma}

\begin{proof}
We define the map $P(\ell): S^+_{k+1/2}(4)\to S^+_{k+1/2}(4\ell)$ by 
\[g|P(\ell)=g-\ell^{1-k}g|(\til{T}(\ell^2)-U_k(\ell^2)) \]
for each prime number $\ell$. According to Lemma \ref{lem:43}, \ref{lem:44} and \ref{lem:45},   
\begin{align*}
g_B|P(p)(\tau)&=\sum_m\biggl(1-\biggl(\frac{(-1)^km}{p}\biggl)\biggl)(c_{g_B}(m)-p^kc_{g_B}(p^{-2}m))q^m \\
&=\sum_m\biggl(1-\biggl(\frac{(-1)^km}{p}\biggl)\biggl)(c(m)-p^kc(p^{-2}m))q^m
\end{align*}
and hence $g_B|P(p)P(q)=g_{B\Pr}|P(p)P(q)$. 
Operators $P(p)$ and $P(q)$ are injective in view of Proposition \ref{prop:21} (1).   
\end{proof}

Next is the proof of Theorem \ref{thm:12}. We can assume that $n\equiv 2,3\pmod 4$ by \cite{KK}. 
We define $g$ to be the function $g_B$ in Lemma \ref{lem:45}, which is independent of the choice of $B$ on account of Lemma \ref{lem:46}. 

Fix a positive integer $m$ such that $(-1)^km\equiv 0,1\pmod 4$. 
If there exists a rational prime $p$ such that 
\[\biggl(\frac{(-1)^km}{p}\biggl)\neq 1, \] 
then Lemma \ref{lem:42} and \ref{lem:43} yield $B\in T_{2n-1}^+$ and $(a,\alp)\in\calt_{2B}^+$ such that $D_B=p$ and $D_{B_{a,\alp}}=m$. 
Thus Lemma \ref{lem:44} and \ref{lem:45} show that 
\[c(m)-p^kc(p^{-2}m)=c_g(m)-p^kc_g(p^{-2}m). \]

If $(-1)^km$ is not a square, then we can find $p$ with $\Psi_p((-1)^km)\neq 1$ and deduce that $c(m)=c_g(m)$, 
which proves the case $n\equiv 3\pmod 4$. 

Assume that $n\equiv 2\pmod 4$. We then have 
\[c(f^2)=c_g(f^2)+(c(1)-c_g(1))f^k.\] 
In view of Lemma \ref{lem:11} and Remark \ref{rem:11}, the proof of Theorem \ref{thm:12} is now complete. 

%%%%%%%%%%%%%%%%%%%%%%%%%%%%%%%%%%%%%%%%%%%%%%%%%%%%%%%%%%%%%%%%%%%%%%%%%%%%%%%%%


\begin{thebibliography}{99}

%\bibitem{B}{S.~B\"{o}cherer},{\it \"{U}ber die Fourierkoeffizienten der Siegelschen Eisensteinreihen\/}, Manuscripta. Math. {\bf 45} (1984), 273--288. 

%\bibitem{E}{M.~Eichler}, {\it Quadratische formen und orthogonale gruppen\/}, Berlin-Heiderberg-New York, (1952).

%\bibitem{EZ}{M.~Eichler} and  {D.~Zagier}, {\it The theory of Jacobi forms\/}, Progress in Mathematics {\bf 55} Birkh\"auser Boston, Inc., Boston, Mass. 1985. 

\bibitem{Ik}
{T.~Ikeda},
{\it On the lifting of elliptic cusp forms to Siegel cusp forms of degree $2n$\/},
Ann. of Math.
{\bf 154}
(2001),
641--681.

\bibitem{Ki}
{Y.~Kitaoka}, {\it Arithmetic of quadratic forms\/}, Cambridge univ press {\bf 106} (1993). 

\bibitem{Ko}
{W.~Kohnen},
{\it Lifting modular forms of half-integral weight to Siegel modular forms of even genus\/},
Math. Ann.
{\bf 322},
(2002),
787--809. 

\bibitem{KK}
{W.~Kohnen} and {H.~Kojima}, 
{\it A Maass space in higher genus\/}, 
Compositio. Math. 
{\bf 141}
(2005), 
313--322. 

\bibitem{UY}
{M.~Ueda} and {S.~Yamana}, 
{\it On newforms for Kohnen plus spaces\/}, (preprint). 

\bibitem{Y}
{S.~Yamana}, 
{\it An explicit construction of Jacobi cusp forms and its applications\/}, (preprint). arXiv:math.~NT/0802.2139.   
\end{thebibliography}
\end{document}